\def\R {\Bbb R}
 \numberwithin{equation}{section} 
\newcommand{\beq}{\begin{equation}}
\newcommand{\eeq}{\end{equation}}
\newcommand{\ben}{\begin{eqnarray}}
\newcommand{\een}{\end{eqnarray}}
\newcommand{\bet}{\begin{eqnarray*}}
\newcommand{\eet}{\end{eqnarray*}}
\newtheorem{thm}{Theorem}[section]
\newtheorem{lem}[thm]{Lemma}
\newtheorem{prop}[thm]{Proposition}
\newtheorem{de}[thm]{Definition}
\newtheorem{rem}[thm]{Remark}
\newtheorem{conj}[thm]{Conjecture}
\def\Z{{\Bbb Z}}
\def\C {\pmb{C}}
\def\R {\Bbb R}
\def\N {\Bbb N}
\def\F {{\mathcal F}}
\def\A{{\mathcal A}}
\def\D {{\mathcal D}}
\def\Q{{\Bbb Q}}
\def\sm{[\!]}
\theoremstyle{plain}
\begin{document}
\baselineskip 16pt

\title
{Affine embeddings and intersections of Cantor sets}

\author{De-Jun FENG$\mbox{}^1$}


\author{Wen Huang$\mbox{}^2$}

\author{Hui Rao$\mbox{}^3$}

\keywords{$C^1$-embedding, affine embedding, self-similar set, central Cantor set}
 \thanks {
2000 {\it Mathematics Subject Classification}:  37D35;  37C45, 28A75}
\date{}

\maketitle
{\small
\begin{center}
$\mbox{}^1$Department of Mathematics, The Chinese University of Hong Kong,\\
Shatin,  Hong Kong.
Email: \email{djfeng@math.cuhk.edu.hk}
\end{center}
\begin{center}
$\mbox{}^2$Department of Mathematics, University of Science and Technology of China\\
 Hefei, Anhui, 230026, P. R.  China.
Email: \email{wenh@mail.ustc.edu.cn}
\end{center}
\begin{center}
$\mbox{}^3$Department of Mathematics, Central China Normal University\\ Wuhan, Hubei,  430070,
P. R. China. Email: \email{hrao@mail.ccnu.edu.cn}
\end{center}
}
\begin{abstract}
Let $E, F\subset \R^d$ be two self-similar sets. Under mild conditions, we show that $F$ can be $C^1$ embedded into $E$ if and only if it can be affinely embedded into $E$; furthermore if $F$ can not be affinely embedded into $E$, then the Hausdorff dimension of the intersection $E\cap f(F)$ is strictly less than that of $F$ for  any $C^1$ diffeomorphism $f$ on $\R^d$. Under certain circumstances, we prove the logarithmic commensurability between the contraction ratios of $E$ and $F$ if $F$ can be affinely embedded into $E$.  As an application, we show that $\dim_HE\cap f(F)<\min\{\dim_HE, \dim_HF\}$ when $E$ is any  Cantor-$p$ set and $F$ any Cantor-$q$ set, where $p,q\geq 2$ are two integers with $\log p/\log q\not \in \Q$. This  is related to   a conjecture of Furtenberg about the intersections of Cantor sets.\\

\noindent R\'{E}SUM\'{E}.
Soit $E$ et $F$ deux ensembles auto-similaires dans $\R^d$. Sous des hypoth\`eses raisonnables, nous montrons qu'il existe un plongement $C^1$ de $F$  dans $E$ si et seulement s'il existe un tel plongement affine; de plus, s'il n'existe pas de plongement affine de $F$ dans $E$, alors pour tout diff\'eomorphisme $C^1$ de $\R^d$ la dimension de Hausdorff de l'intersection $E\cap f(F)$ est strictement inf\'erieure \`a celle de $F$. Dans certains cas, nous montrons que les logarithmes des facteurs de contraction de $E$ et $F$ sont commensurables lorsqu'il existe un plongement affine de $F$ dans $E$. En application, nous montrons que $\dim_H E\cap f(F)<\min\{\dim_H E,\dim_H F\}$ quand $E$ est un $p$-ensemble de Cantor et $F$ est un $q$-ensemble de Cantor, o\`u $p$ et $q$ sont des nombres entiers $\ge 2$ tels que $\log p/\log q\not\in \mathbb Q$. Ceci est reli\'e \`a une conjecture de Furstenberg sur les intersections d'ensembles de Cantor.

\end{abstract}

\setcounter{section}{0}
\section{Introduction}
\setcounter{equation}{0}

Let $A,B$ be two subsets of $\R^d$. We say that $A$ can be {\it affinely embedded} into $B$ if $f(A)\subseteq B$ for some affine map $f: \R^d\to \R^d$ of the form $f(x)=Mx+a$, where $M$ is an invertible $d\times d$ matrices and $a\in \R^d$. Similarly, we say that $A$ can be {\it $C^1$-embedded} into $B$ if $f(A)\subseteq B$ for some $C^1$-diffeomorphism $f$ on $\R^d$.

The objective of this paper is to study the relation between $C^1$-embeddings and affine embeddings for self-similar sets; and to study the necessary conditions under which  one self-similar set  can be affinely embedded or $C^1$-embedded into another self-similar set.
These questions are motivated from several projects in related areas, including the classification of self-similar subsets of  Cantor sets \cite{FRW12}, the characterization of  Lipschitz equivalence  and Lipschitz embedding of Cantor sets \cite{FaMa92, DWXX02},   as well as the study of intersections of Cantor sets \cite{Fur69, EKM10} and the geometric rigidity of $\times m$  invariant measures \cite{Hoc12'}.

Before stating our results, we recall some terminologies about self-similar sets. Let $\Phi=\{\phi_i\}_{i=1}^\ell$ be  a finite family of contractive mappings on $\R^d$.
 Following Barnsley \cite{Bar88}, we say that $\Phi$ is an {\it  iterated function system} (IFS) on $\R^d$.
 Hutchinson \cite{Hut81} showed that there is a unique non-empty compact set $K\subset \R^d$, called the {\it attractor} of $\Phi$,   such that
$$K=\bigcup_{i=1}^\ell \phi_i(K).$$
Correspondingly, $\Phi$ is called a {\it generating IFS} of $K$. One notices that $K$ is a singleton if and only if the mappings $\phi_i$, $1\leq i\leq \ell$, have the same fixed point.
We say that $\Phi$ satisfies the {\it open set condition} (OSC) if there exists a non-empty bounded open set $V\subset \R^d$ such that $\phi_i(V)$, $1\leq i\leq \ell$, are pairwise disjoint subsets of $V$. Similarly, we say that $\Phi$ satisfies the {\it strong separation condition} (SC) if $\phi_i(K)$ are pairwise disjoint subsets of $K$. The SC always implies the OSC.

A mapping $\phi:\R^d\to \R^d$ is called a {\it similitude} if $\phi$ is of the form $\phi(x)=\alpha R(x)+a$ for $x\in \R^d$, where $\alpha>0$, $R$ is an orthogonal transformation and $a\in \R^d$.  When all maps in an IFS $\Phi$ are similitudes, the attractor  $K$ of $\Phi$ is called a {\it self-similar} set; in this case, the {\it self-similar dimension} of $K$ is defined as the unique positive number $s$ so that $\sum_{i=1}^\ell \rho_i^s=1$, where $\rho_i$ denotes the contraction ratio of $\phi_i$. It is well known \cite{Hut81} that $\dim_HK=s$ if $\Phi$ consists of similitudes and satisfies the OSC, here $\dim_H$ denotes the Hausdorff dimension; the condition of OSC can be further replaced by some significantly weaker separation condition in the case $d=1$ and $s\leq 1$ \cite{Hoc12}.

In the remaining part of this section, we assume that  $\Phi=\{\phi_i\}_{i=1}^\ell$ and $\Psi=\{\psi_j\}_{j=1}^m$ are two families of contractive similitudes of $\R^d$ of the form
\begin{equation}
\label{e-1.1}
\phi_i(x)=\alpha_iR_i(x)+a_i,\quad \psi_j(x)=\beta_j O_j(x)+b_j, \quad i=1,\ldots, \ell,\; j=1,\ldots, m,
\end{equation}
where $0<\alpha_i, \beta_j<1$, $a_i, b_j\in \R^d$ and $R_i, O_j$ are orthogonal transformations on $\R^d$.  Let $E$, $F$ be the attractors of  $\Phi$ and $\Psi$, respectively. To avoid  triviality, we always assume that $E, F$ are not singletons in this paper.

For any real invertible $d\times d$  matrix $M$, let $\kappa(M)$ denote the {\it condition number} of $M$, that is,
\begin{equation}
\label{e-1.2}
\kappa(M)=\max\left\{\frac{|Mu|}{|Mv|}:\; u, v\in \R^d \mbox{ with }|u|=|v|=1\right\}.
\end{equation}

The first result of this paper is the following.

\begin{thm}
\label{thm-1.1}  Assume that $\Phi$ satisfies the OSC, and the Hausdorff dimension of $F$ equals its self-similar dimension. Then $F$ can be $C^1$-embedded into $E$ if and only if $F$ can be affinely embedded into $E$. Furthermore if $F$ can not  be affinely embedded into $E$, then  $$\dim_H(E\cap f(F))<\dim_HF$$ for any $C^1$ diffeomorphism $f$ on $\R^d$; moreover, for any $L>0$,
 $$
 \sup_{f\in {\rm Diff^1_L(\R^d)}} \dim_H(E\cap f(F))<\dim_HF,
 $$
 where ${\rm Diff^1_L(\R^d)}$ denotes the collection of all $C^1$ diffeomorphisms $f$ on $\R^d$ so that $\kappa(D_x(f^{-1}))\leq L$ for any $x\in E$, in which  $D_x(f^{-1})$ denotes the differential of $f^{-1}$ at $x$.
\end{thm}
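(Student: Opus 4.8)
The plan is to prove the strongest assertion, the uniform estimate, and to deduce the other two from it. First observe that the ``if'' direction of the equivalence is immediate, since an invertible affine map is a $C^1$-diffeomorphism. Next, the uniform statement implies the pointwise one: for a fixed $C^1$-diffeomorphism $f$, compactness of $E$ and invertibility of the differential force $\kappa(D_x(f^{-1}))\le L$ for all $x\in E$ with some finite $L$, so $f\in {\rm Diff}^1_L(\R^d)$ and $\dim_H(E\cap f(F))\le \sup_{g\in {\rm Diff}^1_L(\R^d)}\dim_H(E\cap g(F))<\dim_H F$. Finally, the pointwise strict inequality gives the ``only if'' direction: if $f(F)\subseteq E$ for a $C^1$-diffeomorphism $f$, then $E\cap f(F)=f(F)$, and since diffeomorphisms preserve Hausdorff dimension we get $\dim_H(E\cap f(F))=\dim_H F$, contradicting strictness unless $F$ affinely embeds. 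Thus everything reduces to the uniform bound, which I would attack by contradiction: assume $\sup_{f\in{\rm Diff}^1_L(\R^d)}\dim_H(E\cap f(F))=s$, where $s=\dim_H F$ (the reverse inequality $\le s$ always holds, as $f(F)$ has dimension $s$), and from a sequence $f_n\in {\rm Diff}^1_L(\R^d)$ with $\dim_H(E\cap f_n(F))\to s$ manufacture an invertible affine $g$ with $g(F)\subseteq E$, contradicting non-embeddability.

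The engine of the construction is a compactness argument in the space of affine maps, made available precisely by the condition-number bound $L$. For each $n$ I would localize: using that $f_n$ is $C^1$ with $\kappa(D(f_n^{-1}))\le L$, on a sufficiently deep cylinder $\psi_{\mathbf{j}}(F)$ the map $f_n$ differs from its affine part $x\mapsto D_{x_n}f_n\,(x-x_n)+f_n(x_n)$ by an error negligible relative to the cylinder's diameter. Since $\psi_{\mathbf{j}}(F)$ is a similar copy of $F$, this realizes an affine copy of $F$ lying within Hausdorff distance $o(\mathrm{diam})$ of $E$. I then renormalize by the inverse of the similitude $\phi_{\mathbf{i}}$ of the $E$-cylinder that this image meets, obtaining affine maps $g_n$ whose linear parts have condition number controlled by $L$, whose scale is of unit order, whose translation parts are bounded, and for which the Hausdorff distance from $g_n(F)$ to $E$ tends to $0$. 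The uniform condition-number bound keeps the linear parts from collapsing, so the $g_n$ lie in a compact family; passing to a subsequence $g_n\to g$ with $g$ invertible affine, and using that $E$ is closed, the vanishing Hausdorff distance yields $g(F)\subseteq E$.

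The heart of the matter, and the step I expect to be hardest, is to guarantee that when $\dim_H(E\cap f_n(F))$ is close to $s$ there really is a cylinder of $F$ whose nearly-affine image lies \emph{almost entirely} inside $E$, rather than merely meeting $E$ in a large-dimensional but sparse set. Here I would exploit both hypotheses. The assumption $\dim_H F=s$ (the self-similar dimension) gives the natural self-similar measure $\mu$ on $F$, with $\mu(\psi_{\mathbf{j}}(F))=\prod_k\beta_{j_k}^{\,s}$, exact dimension $s$ and efficient cylinder covers, so that full-dimensional intersection is detectable by counting cylinders. The self-similar structure of $F$ then supplies rich microsets: blowing up $F$ along the cylinder addresses of a well-chosen point recovers genuine similar copies of $F$, so a full-dimensional blow-up of $f_n^{-1}(E)\cap F$ is forced to contain such a copy rather than a thin piece. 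Writing $s'=\dim_H E$, which equals the self-similar dimension of $E$ by the open set condition, the OSC plays a dual role: it makes $E$ Ahlfors $s'$-regular and, crucially, controls the blow-ups of $E$, ensuring that at matched scales $E$ looks locally like a finite, controlled union of copies of $E$; this is what lets me pull a small copy of $F$ back into a \emph{single} copy of $E$ after renormalization.

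Finally I would assemble these pieces quantitatively: a density and pigeonhole argument over the $\asymp\delta^{-s'}$ cylinders of $E$ at scale $\delta$, applied to $\mu$ restricted to $f_n^{-1}(E)$, locates one $F$-cylinder whose image concentrates in one $E$-cylinder with near-full dimension, and iterating drives the escaping mass to zero, producing the near-embedded cylinders required in the second paragraph. The principal technical difficulty is exactly this upgrade from ``intersection of full dimension'' to ``a whole cylinder mapped nearly inside,'' that is, converting a statement about Hausdorff dimension into one about set containment; the separation afforded by the OSC on $E$ and the homogeneity of $F$ coming from $\dim_H F=s$ are what make the conversion possible, while the bound $L$ is precisely what is needed for the limiting map to remain a genuine invertible affine embedding.
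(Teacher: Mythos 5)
Your reductions (affine $\Rightarrow$ $C^1$; uniform $\Rightarrow$ pointwise via compactness of $E$; pointwise strict inequality $\Rightarrow$ the ``only if'' direction) and your closing compactness step (renormalized affine approximations of $f_n$ with condition number $\le L$, bounded scale and translation, closedness of $E$) all match the paper. The genuine gap is in what you yourself call the heart of the matter, and the tools you propose there do not work. First, a subset of $F$ of Hausdorff dimension close to (or even equal to) $s$ can be null for the self-similar measure $\mu$, so ``$\mu$ restricted to $f_n^{-1}(E)$'' may be the zero measure and your density/pigeonhole argument has nothing to act on: Hausdorff dimension of the intersection gives no lower bound on its $\mu$-mass, and a Frostman measure supported on the intersection has no useful relation to the cylinder structure of $F$. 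Likewise the claim that a full-dimensional blow-up of $f_n^{-1}(E)\cap F$ ``is forced to contain'' a similar copy of $F$ is unjustified: full-dimension subsets of $F$ can be nowhere dense in $F$, and their magnifications need not contain any copy of $F$. Second, even if you could concentrate the intersection ``with near-full dimension'' in a single $E$-cylinder, that property is useless in your limiting step: $\dim_H(g_n(F)\cap E)\to s$ and $g_n\to g$ imply nothing about $\dim_H(g(F)\cap E)$, since dimension of intersections can collapse under Hausdorff limits. The limit argument requires a condition that is closed under Hausdorff convergence, and the ``near containment'' you aim for is precisely what cannot be extracted directly from the dimension hypothesis (it is essentially the conclusion itself).

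The paper's way around this is to replace ``nearly inside'' by a much weaker but limit-stable finite-resolution statement. Defining $s_n<s$ by $\sum_{J\in\{1,\ldots,m\}^n,\,J\neq 1^n}\beta_J^{s_n}=1$, it shows that if $\dim_H(f^{-1}(E)\cap F)>s_n$ then there are arbitrarily long words $W$ such that $\psi_{WJ}(F)\cap f^{-1}(E)\neq\emptyset$ for \emph{every} $J$ of length $n$; otherwise at each generation one could delete at least one child cylinder and the resulting covers would witness $\dim_H(f^{-1}(E)\cap F)\le s_n$. Nonempty intersection of compact sets \emph{does} survive Hausdorff limits, so after rescaling --- where a volume argument under the OSC (the paper's Lemma \ref{lem-2.1}) bounds by a constant $N_0$ the number of $E$-cylinders that can be involved; your ``pull back into a single copy of $E$'' claim fails here, since the relevant piece of $E$ may straddle several cylinders --- one obtains, for each $n$, at most $N_0$ affine maps $g_i$ with every level-$n$ cylinder of $F$ meeting $\bigcup_i g_i(E)$, and letting $n\to\infty$, $F\subseteq\bigcup_{i=1}^k g_i(E)$ with $k\le N_0$. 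The final ingredient missing from your sketch is the Baire category theorem: since $F$ is covered by finitely many closed sets $g_i(E)$, some $g_i(E)$ contains a nonempty relatively open subset of $F$, hence a cylinder $\psi_J(F)$, whence $F\subseteq\psi_J^{-1}\circ g_i(E)$. Without a step of this kind one cannot pass from a finite cover of $F$ by affine copies of $E$ to an embedding of $F$ into a single affine copy.
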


The proof of Theorem \ref{thm-1.1} is mainly based on the similarity of self-similar sets. In what follows, we discuss when $F$ can be affinely embedded into $E$.
It is natural to  expect that if $E, F$ are  totally disconnected and  $F$ can be affinely embedded into $E$, then the ratios $\alpha_i, \beta_j$ should satisfy  some arithmetic conditions.  We formulate the following  conjecture from  this view point.
\begin{conj}
\label{conj1}
Suppose that  $E, F$ are  totally disconnected and   $F$ can be affinely embedded into $E$.  Then for each $1\leq j\leq m$,  there exist non-negative rational numbers $t_{i,j}$ such that
$
\beta_j=\prod_{i=1}^\ell\alpha_i^{t_{i,j}}.
$
In particular, if $\alpha_i=\alpha$ for  $1\leq i\leq \ell$, then $\log \beta_j/\log \alpha\in \Q$ for $1\leq j\leq m$.
\end{conj}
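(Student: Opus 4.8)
The plan is to start from an affine embedding $f(x)=Mx+a$ with $f(F)\subseteq E$ and first to remove the linear distortion of $M$. Observe that $f(F)$ is the attractor of the conjugated IFS $\{f\circ\psi_j\circ f^{-1}\}_{j=1}^m$, whose maps have linear parts $\beta_j\,MO_jM^{-1}$; only when $M$ is a similitude are these again similitudes with ratio $\beta_j$. I would reduce to that case by a blow-up argument: magnify $E$ and $f(F)$ around a point $x_0\in f(F)$ by larger and larger factors, extract (by compactness in the Hausdorff metric) a tangent configuration, and use the $C^1$-rigidity packaged in Theorem~\ref{thm-1.1} together with a bound on the condition number $\kappa(M)$ to force the limiting linear part to be conformal. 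After this reduction $M=cR$ with $c>0$ and $R$ orthogonal, so $f(F)$ is \emph{genuinely self-similar} with contraction ratios $\beta_j$, and the question becomes purely one about two self-similar sets, one sitting inside the other.

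The heart of the matter is a \emph{scale-matching} statement. For a word $w=j_1\cdots j_k$ write $\psi_w=\psi_{j_1}\circ\cdots\circ\psi_{j_k}$ and $\beta_w=\beta_{j_1}\cdots\beta_{j_k}$, and similarly $\phi_u,\alpha_u$ for words over $\{1,\dots,\ell\}$. The cylinder $f(\psi_w(F))$ is a copy of $f(F)$ of diameter $c\beta_w\,\mathrm{diam}(F)$ lying in $E$. Because $E,F$ are totally disconnected, self-similarity provides a \emph{uniform relative gap}: in dimension one the convex hull of every cylinder $\psi_w(F)$ contains a complementary interval of length at least $\eta_F\,\beta_w\,\mathrm{diam}(F)$, and likewise for $E$ with a constant $\eta_E>0$. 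I would use these gaps to show there is a constant $C_0=C_0(E,F,c)$ such that each $f(\psi_w(F))$ is contained, up to a controlled boundary, in a single cylinder $\phi_{u(w)}(E)$ with
\[
C_0^{-1}\le \frac{\alpha_{u(w)}}{c\,\beta_w}\le C_0 .
\]
The mechanism is that a copy of $F$ carries its own definite gaps and therefore cannot straddle a gap of $E$ of comparable size, which pins it inside one cylinder of the matching scale.

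Taking logarithms, the scale-matching says that the additive semigroup $L_\beta=\{\sum_j n_j\log\beta_j:\ n_j\in\Z,\ n_j\ge 0\}$, shifted by $\log c$, stays within bounded distance of $L_\alpha=\{\sum_i k_i\log\alpha_i:\ k_i\in\Z,\ k_i\ge 0\}$. To upgrade this \emph{approximate} relation to the \emph{exact} conclusion $\log\beta_j\in\sum_i\Q\,\log\alpha_i$, I would renormalize: choose words $w$ with $\beta_w\to0$, rescale $f(\psi_w(F))$ back to unit size, and pass to a subsequential limit. The limit is an exact copy of $F$ sitting inside a \emph{micro-set} of $E$, and micro-sets of $E$ are governed by the group $G_\alpha=\sum_i\Z\log\alpha_i$ generated by the logarithms of the ratios of $E$. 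Comparing the intrinsic scales $\beta_j$ of the embedded copy of $F$ against this quantized scale structure, and iterating into deeper cylinders, should force $\log\beta_j$ into the $\Q$-span of $\{\log\alpha_i\}$. In the homogeneous case $\alpha_i\equiv\alpha$ this is clean: $G_\alpha=\Z\log\alpha$ is discrete, taking $w=j^k$ forces $\mathrm{dist}(k\log\beta_j+\log c,\ \Z\log\alpha)$ to remain bounded for all $k$, and passing to the exact limit should rule out $\log\beta_j/\log\alpha\notin\Q$ by equidistribution of $\{k\log\beta_j/\log\alpha\}$ modulo one.

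I expect the main obstacle to be precisely the passage from \emph{bounded-distance} interleaving to \emph{exact} commensurability when the $\log\alpha_i$ are themselves mutually incommensurable, for then $G_\alpha$ is dense in $\R$ and the scale structure of $E$ is too rich for the bounded-distance constraint alone to pin down $\log\beta_j$; here one seems to need genuine rigidity input of Furstenberg type rather than soft equidistribution. A second, independent obstacle is dimension: in $\R^d$ with $d\ge2$ total disconnectedness no longer supplies interval-like gaps, so the uniform relative gap underlying the scale-matching step must be replaced by a more delicate separation argument. These are the reasons the statement is posed as a conjecture and is provable only under additional structure.
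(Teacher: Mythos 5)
The statement you were asked about is Conjecture~\ref{conj1}: the paper does \emph{not} prove it, and it remains open. The paper only establishes partial results under substantial extra hypotheses (Theorem~\ref{thm-1.3}: SC, homogeneous ratios, $\dim_H E<1/2$; Theorem~\ref{thm-1.4}: central Cantor sets with $\alpha<1/4$, or $\alpha<\sqrt2-1$, or $1/\alpha$ Pisot; Theorem~\ref{thm-1.5}: the Pisot case). You correctly sense at the end that your sketch cannot close, but several of its intermediate steps are flawed in concrete ways, not merely incomplete. First, the blow-up reduction to a conformal $M$ does not work: affine maps commute with dilations, so magnifying $E$ and $f(F)$ around $x_0\in f(F)$ reproduces the same linear distortion $M$ at every scale --- the anisotropy survives in any tangent configuration, and Theorem~\ref{thm-1.1} (which compares $C^1$ with affine embeddings) says nothing about forcing conformality. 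Second, the scale-matching claim that each copy $f(\psi_w(F))$ is pinned inside a \emph{single} cylinder $\phi_{u(w)}(E)$ of comparable scale is false in general: Lemma~\ref{ex-1} of the paper exhibits embeddings $\lambda_k\C_{\beta_k}\subset\C_{\alpha_k}$ with $\log\beta_k/\log\alpha_k=(2k+1)/2\notin\N$, which exist precisely because the embedded copy is spread across the gap structure of $\C_{\alpha_k}$; cylinder-pinning only holds under quantitative size conditions such as $\alpha<1/4$, which is exactly why Theorem~\ref{thm-1.4}(i) carries that hypothesis. Third, even granting scale-matching, your conclusion in the homogeneous case --- that $\mathrm{dist}(k\log\beta_j+\log c,\ \Z\log\alpha)$ stays bounded --- is vacuous: every real number lies within $\frac12\log(1/\alpha)$ of the lattice $\Z\log\alpha$, so boundedness excludes nothing, and the entire content of the conjecture is hidden in the upgrade you defer.

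It is worth noting that the paper's partial results proceed in the opposite direction from your plan. Rather than trying to rigidify an approximate lattice relation, the paper shows (Lemma~\ref{lem-4.1}) that \emph{irrationality} of $\log\beta/\log\alpha$ produces an overabundance of embeddings --- for every $u\in[0,\frac{1-2\alpha}{\alpha}]$ some translate satisfies $u\C_\beta+c\subseteq\C_\alpha$ --- and then contradicts this abundance with a rigidity property of $\C_\alpha$: uniqueness of signed $\alpha$-expansions when $\alpha<\sqrt2-1$ (Proposition~\ref{pro-4.1}), the set-of-uniqueness property via Salem--Zygmund and Wiener's theorem in the Pisot case, or a Lie-group/analyticity argument combined with $\dim_H E<1/2$ in Theorem~\ref{thm-1.3}. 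If you want to make progress on special cases, that ``too many embeddings'' mechanism, rather than approximate scale-matching, is the engine that actually runs.
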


We remark that  the above arithmetic conditions on   $\alpha_i, \beta_j$  do fulfill if  $E$ and $F$ are Lipschitz equivalent and dust-like (i.e.,  $\Phi$, $\Psi$ satisfy the SC) \cite{FaMa92}. Nevertheless, no arithmetic conditions are needed for the Lipschitz embeddings. It was shown in \cite{DWXX02} that
if $E, F$ are dust-like with $\dim_HF<\dim_HE$, then  $F$ can be Lipschitz embedded into $E$.

In the following we give some partial answers to Conjecture \ref{conj1}.
\begin{thm}Assume that $\Phi$ satisfies the SC,   $\alpha_i=\alpha$ for $1\leq i\leq \ell$, and $\dim_HE<1/2$. If   $F$ can be affinely embedded into $E$, then   $\log \beta_j/\log \alpha\in \Q$ for $1\leq j\leq m$.
\label{thm-1.3}
\end{thm}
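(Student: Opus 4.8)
The plan is to fix an affine embedding $f(x)=Mx+c$ with $f(F)\subseteq E$ and to compare, cylinder by cylinder, the single scale $\alpha$ of $E$ against the scales produced by the self-similarity of $F$. For a finite word $\mathbf j=j_1\cdots j_n$ write $\psi_{\mathbf j}=\psi_{j_1}\circ\cdots\circ\psi_{j_n}$, $F_{\mathbf j}=\psi_{\mathbf j}(F)$ and $\beta_{\mathbf j}=\beta_{j_1}\cdots\beta_{j_n}$, and similarly $\phi_{\mathbf i}$, $E_{\mathbf i}$ for $\Phi$. Then $f\circ\psi_{\mathbf j}$ maps $F$ into $E$ with linear part $\beta_{\mathbf j}MO_{\mathbf j}$, whose condition number is exactly $\kappa(M)$. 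Since $f(F_{\mathbf j})$ is a nonempty compact subset of $E$ and $\Phi$ satisfies the SC, any two cylinders of $E$ containing $f(F_{\mathbf j})$ must be comparable (incomparable cylinders are disjoint), so they form a finite nested chain and there is a unique smallest one, $E_{\mathbf i(\mathbf j)}$, of some length $n(\mathbf j)$. I set $g_{\mathbf j}=\phi_{\mathbf i(\mathbf j)}^{-1}\circ f\circ\psi_{\mathbf j}$, so that $g_{\mathbf j}(F)\subseteq E$; by minimality of $\mathbf i(\mathbf j)$ the image $g_{\mathbf j}(F)$ meets at least two distinct level-one cylinders $\phi_i(E)$. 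Consequently $\mathrm{diam}\,g_{\mathbf j}(F)$ lies between $\rho:=\min_{i\neq i'}\mathrm{dist}(\phi_i(E),\phi_{i'}(E))>0$ and $\mathrm{diam}\,E$. The linear part of $g_{\mathbf j}$ has all singular values comparable (with constants depending only on $\kappa(M)$) to the scalar $s_{\mathbf j}:=\beta_{\mathbf j}\alpha^{-n(\mathbf j)}$, so the diameter bound pins $s_{\mathbf j}$ into a fixed compact interval $[c_1,c_2]\subset(0,\infty)$.

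The theorem then reduces to a rigidity statement: the set $\mathcal S$ of admissible normalized scales — all $s>0$ for which $E$ contains an affine copy of $F$ of condition number $\kappa(M)$ and scaling ratio $\asymp s$ that meets at least two level-one cylinders — is \emph{discrete} in $(0,\infty)$. Combined with $s_{\mathbf j}\in[c_1,c_2]$, discreteness forces the set $\{s_{\mathbf j}\}$ to be \emph{finite}, and this finiteness is exactly what drives the arithmetic conclusion below. Establishing discreteness is where the two standing hypotheses, the SC and $\dim_HE<1/2$, must be spent, and I expect it to be the main obstacle. The natural route is a compactness-and-rigidity argument: if $\mathcal S$ accumulated there would be pairwise distinct normalized copies $g_k(F)\subseteq E$ with $g_k\to g_*$ uniformly on $F$. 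Then for each $x\in F$ the points $g_k(x)$ and $g_*(x)$ of $E$ become arbitrarily close, so by the uniformly positive gaps of the SC Cantor set $E$ they must share the same $E$-address to a depth tending to $\infty$; propagating this through the straddling points of $g_*(F)$, which carry genuinely distinct addresses, should force $g_k=g_*$ for large $k$, contradicting distinctness. The hypothesis $\dim_HE<1/2$ is what I expect to make this local rigidity quantitative, by controlling how a bi-Lipschitz copy of $F$ can be deformed inside a set this thin; turning the heuristic into a genuine bound is the delicate point.

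Granting the discreteness (hence finiteness of $\{s_{\mathbf j}\}$), the conclusion follows by equidistribution. Fix $j_0\in\{1,\dots,m\}$ and apply the construction to the words $\mathbf j=j_0^{\,p}$, $p\ge 1$, obtaining $s_{j_0^p}=\beta_{j_0}^{\,p}\alpha^{-n_p}\in[c_1,c_2]$ where $n_p=n(j_0^p)$. Setting $\theta=\log\beta_{j_0}/\log\alpha$, one has $\log s_{j_0^p}/\log\alpha=p\theta-n_p$, which stays bounded; hence $n_p=\lfloor p\theta\rfloor+O(1)$ and $\log s_{j_0^p}/\log\alpha$ equals $p\theta$ modulo a bounded integer. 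If $\theta\notin\Q$, Weyl's equidistribution of $\{p\theta\}$ forces the bounded sequence $(s_{j_0^p})_{p\ge1}$ to have an accumulation point, contradicting the finiteness of $\{s_{\mathbf j}\}$. Therefore $\theta=\log\beta_{j_0}/\log\alpha\in\Q$, and since $j_0$ was arbitrary this gives $\log\beta_j/\log\alpha\in\Q$ for every $1\le j\le m$, as claimed.

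I should note two points to watch when executing the plan. First, the comparison $s_{\mathbf j}=\beta_{\mathbf j}\alpha^{-n(\mathbf j)}$ differs from the true scaling ratio of $g_{\mathbf j}(F)$ only by the fixed factor $\|M\|$ and the fixed distortion $\kappa(M)$, so discreteness of admissible ratios and finiteness of $\{s_{\mathbf j}\}$ are interchangeable; no hypothesis on $\dim_HF$ is needed. Second, in dimension $d\ge2$ the orthogonal parts $R_{\mathbf i(\mathbf j)}$ and $O_{\mathbf j}$ vary over a (possibly infinite) subset of $O(d)$, so the normalized maps $g_{\mathbf j}$ need not lie in a finite set even though their scalar scales do; the rigidity lemma must therefore be formulated at the level of scales rather than of maps, and the limiting argument above must extract the scalar coincidence directly. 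This is precisely the feature that the quantitative form of the $\dim_HE<1/2$ hypothesis is meant to supply.
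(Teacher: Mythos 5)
Your reduction to normalized copies (pull $f\circ\psi_{\mathbf j}(F)$ back by the smallest $E$-cylinder containing it, so that the copy straddles two level-one cylinders and its scale $s_{\mathbf j}=\beta_{\mathbf j}\alpha^{-n(\mathbf j)}$ lies in a fixed compact interval) is sound, and so is the closing pigeonhole/equidistribution step. But the entire difficulty of the theorem has been displaced into the ``discreteness of $\mathcal S$'' lemma, which you do not prove and explicitly flag as a heuristic. The sketch you offer for it cannot work as stated: in a totally disconnected set satisfying the SC, two \emph{distinct} affine copies of $F$ inside $E$ can agree in their $E$-addresses to arbitrary finite depth, so along a convergent sequence $g_k\to g_*$ of distinct embeddings the fact that $g_k(x)$ and $g_*(x)$ eventually share long common addresses yields no contradiction whatsoever; nothing in the sketch forces $g_k=g_*$, and nothing in it actually uses $\dim_HE<1/2$ (you only say you ``expect'' the hypothesis to make the rigidity quantitative). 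Worse, what you must rule out is exactly the scenario that irrationality is known to produce: Lemma \ref{lem-4.1}(ii) of the paper shows that if $\C_\beta$ embeds affinely in $\C_\alpha$ and $\log\beta/\log\alpha\notin\Q$, then for \emph{every} $u\in[0,\frac{1-2\alpha}{\alpha}]$ some translate $u\C_\beta+c$ lies in $\C_\alpha$ --- the admissible scales fill an interval. So ``discreteness under the theorem's hypotheses'' is not a lemma one can hope to establish by soft compactness; it is essentially equivalent to the theorem itself.

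The paper spends the hypotheses in a completely different, global way. Assuming (in the homogeneous case) that $\gamma=\log\beta/\log\alpha\notin\Q$, it applies the embedding to the cylinders $\psi_{1^n}(F)$; the SC provides a uniform gap $\delta$ which forces each small image into a single $E$-cylinder of depth roughly $\gamma n$, giving $\alpha^{p+\{\gamma n\}}P_n^{\tau}MO_1^n(F)+(\mbox{translation})\subseteq E$ for suitable orthogonal $P_n$. The crux --- for which your proposal has no substitute, and which is precisely the $d\ge 2$ rotation issue you yourself flag at the end --- is to control the pairs $(\{\gamma n\}, O_1^n)$: the closure $W$ of $\{(e^{2\pi in\gamma},O_1^n)\}$ in the compact group $S^1\times\mathcal{O}(d)$ is a closed subgroup, hence a Lie group by Cartan's theorem; irrationality of $\gamma$ makes its identity component project onto $S^1$, and a one-parameter subgroup $t\mapsto(e^{2\pi it},\phi(t))$ with $\phi$ analytic shows that the set $\{|x-y|^2:x,y\in E\}$ contains the range of the non-constant analytic function $f(t)=\alpha^{2(p+t)}|M\phi(t)v|^2$ on $[0,\frac12)$, i.e.\ a nondegenerate interval. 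Then $1\le\dim_H(E-E)\le\dim_H(E\times E)=2\dim_HE$ contradicts $\dim_HE<1/2$: the dimension hypothesis enters only at the very end, as an obstruction on the distance set of $E$, not as a local rigidity tool. (Your idea of running the argument along the words $j_0^p$ does neatly avoid the paper's sub-IFS trick for non-equal $\beta_j$'s, but that is a minor point; as it stands the proposal has a genuine gap at its central lemma.)
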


The main idea in the proof of Theorem \ref{thm-1.3} is to show that if  $F$ can be affinely embedded into $E$ but  $\log \beta_j/\log \alpha\not\in \Q$ for some $j$, then the set $\{|x-y|:\; x,y\in E\}$ contains a non-degenerate interval, which contradicts the assumption that $\dim_HE<1/2$. The argument involves the theory of compact Lie groups.

We can further sharpen the above result when both $E$ and $F$ are central Cantor sets in $\R$. For $0<\rho<1/2$, let $\C_\rho$ denote the attractor of the IFS
$\{\rho x,\; \rho x+ 1-\rho\}$. It is easy to check that
$$\C_\rho=\left\{ x=\sum_{i=0}^{+\infty}\epsilon_i(1-\rho)\rho^i:\epsilon_i\in \{0,1\} \text{ for all
}i\ge 0\right\}.$$
Recall that a {\it Pisot number} is an algebraic integer $>1$  whose
algebraic conjugates
are all inside the unit disk. For instance,  $\sqrt{2}+1$ is a Pisot number (it has a unique algebraic conjugate $\sqrt{2}-1$), so are the positive roots of $x^n-x^{n-1}-\cdots-x-1$ for $n\geq 2$  and the positive roots of $x^{2n}(x-2)-1$ for $n\geq 1$. Of course, all integers greater than 1 are Pisot numbers. The readers are referred to \cite{Sal63} for further properties of Pisot numbers.

\begin{thm}
\label{thm-1.4}
Let $0<\beta<\alpha<1/2$. Then the following statements hold.
\begin{itemize}
\item[(i)] If $\alpha<1/4$, then $\C_\beta$ can be affinely embedded into $\C_\alpha$ if and only if $\log \beta/\log \alpha\in \N$.
\item[(ii)] If $1/4\leq \alpha< \sqrt{2}-1$ and $\C_\beta$ can be affinely embedded into $\C_\alpha$, then $\log \beta/\log \alpha\in \Q$. However, it is possible that $\log \beta/\log \alpha\not\in \N$.
    \item[(iii)] If $1/\alpha$ is a Pisot number  and $\C_\beta$ can be affinely embedded into $\C_\alpha$, then $\log \beta/\log \alpha\in \Q$; furthermore $1/\beta$ is a Pisot number.
\end{itemize}
\end{thm}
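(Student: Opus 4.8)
The plan is to separate the (easy) sufficiency in (i) from the necessity statements, and for the latter to extract from the embedding a one-sided accumulation structure inside $\C_\alpha$ that can be analysed first by equidistribution and then, in the Pisot case, by algebraic separation. The sufficiency in (i) is the only direction needing no arithmetic: if $\log\beta/\log\alpha=n\in\N$ then $\beta=\alpha^n$, and writing $1-\alpha^n=(1-\alpha)(1+\alpha+\cdots+\alpha^{n-1})$ shows that replacing each digit $\epsilon_i$ in the $\alpha^n$-expansion of a point of $\C_{\alpha^n}$ by a block of $n$ equal digits produces a legitimate $\alpha$-expansion; hence $\C_{\alpha^n}\subseteq\C_\alpha$ and the identity embeds $\C_\beta$ into $\C_\alpha$.

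For the necessity directions I would first normalise the embedding $f(x)=\lambda x+a$. Since $\C_\alpha$ is symmetric about $1/2$, composing with $x\mapsto 1-x$ lets me assume $\lambda>0$. Using $\C_\alpha=\alpha\C_\alpha\cup(\alpha\C_\alpha+(1-\alpha))$, whenever $f(\C_\beta)$ lies in a single level-one cylinder I rescale by $\alpha^{-1}$ to get a new embedding with ratio $\lambda/\alpha$; iterating, I may assume $f(\C_\beta)$ is in general position, meeting both $[0,\alpha]$ and $[1-\alpha,1]$ and hence omitting the central gap $(\alpha,1-\alpha)$. The engine for rationality is then the accumulation of $\C_\alpha$ at the endpoint $a=f(0)=\min f(\C_\beta)$: the points $f(\beta^k)=a+\lambda\beta^k$ all lie in $\C_\alpha$ and decrease to $a$. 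If $a$ were eventually all-ones in its $\alpha$-expansion, a gap of $\C_\alpha$ would abut $a$ on the right and the $f(\beta^k)$ could not lie in $\C_\alpha$; so $a$ has infinitely many zero digits, and at each such scale $\alpha^j$ a gap of $\C_\alpha$ of proportional length sits just to the right of $a$. Passing to logarithms, the progression $\log\lambda+k\log\beta$ must avoid a family of forbidden intervals recurring with period $\log(1/\alpha)$, which by Weyl equidistribution is impossible unless $\log\beta/\log\alpha\in\Q$. For part (i) this is in any case immediate from Theorem \ref{thm-1.3}, since $\alpha<1/4$ gives $\dim_H\C_\alpha<1/2$ and the generating IFS has equal ratios and satisfies the SC; for (ii) and (iii), where $\dim_H\C_\alpha\geq 1/2$, the equidistribution argument (or its Pisot refinement below) is the route.

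To upgrade rationality to $\log\beta/\log\alpha\in\N$ when $\alpha<1/4$, I would exploit the sparsity $1-2\alpha>1/2$: writing $\log\beta/\log\alpha=p/q$ in lowest terms and matching the gap-length list $\{(1-2\beta)\beta^m\}$ of $\C_\beta$ against $\{(1-2\alpha)\alpha^n\}$ of $\C_\alpha$ across the image of the central gap, the rigidity forced by $\alpha<1/4$ should pin the inner endpoints of the two halves to $\alpha$ and $1-\alpha$ exactly and, after renormalising at the matched scales $\beta^q=\alpha^p$, force $q=1$. For the Pisot case (iii) I would instead convert the embedding into algebraic relations: for $s,s'\in\C_\beta$ one has $\lambda(s-s')=f(s)-f(s')\in\C_\alpha-\C_\alpha$, and for Pisot $1/\alpha$ nonzero finite $\{-1,0,1\}$-combinations of powers of $\alpha$ are bounded away from $0$; this separation should force $\lambda\in\Q(1/\alpha)$ and make $1/\beta$ a root of a monic integer polynomial, while the contraction estimates bound its conjugates inside the unit disk, so that $1/\beta$ is Pisot and in particular $\log\beta/\log\alpha\in\Q$. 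The final assertion of (ii) would be settled by exhibiting an explicit embedding with $\log\beta/\log\alpha\in\Q\setminus\N$ in the range $1/4\leq\alpha<\sqrt{2}-1$, using the extra overlap available once $\dim_H\C_\alpha\geq 1/2$.

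The hard part will be the two rigidity steps rather than the normalisation or equidistribution, which are comparatively routine. In (i) I must control the local digit structure of the base point $a$ well enough that the forbidden scales recur densely, and then show that the gap-length matching across the image of the central gap is genuinely exact (not merely approximate), which is precisely what $\alpha<1/4$ should buy. In (iii) the delicate point is passing from the geometric containment to a \emph{monic} integer relation for $1/\beta$ with all conjugates strictly inside the unit disk; guaranteeing the leading coefficient and the strict conjugate bound, rather than just algebraicity, is where the Pisot hypothesis on $1/\alpha$ must be used in full force.
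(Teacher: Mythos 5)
Your normalisation, the sufficiency direction of (i), and deducing rationality in (i) from Theorem \ref{thm-1.3} are all fine; but each of the hard steps has a genuine gap. The central one is your equidistribution engine. The gaps of $\C_\alpha$ sitting just to the right of $a$ do exist at every scale $\alpha^j$ with $j$-th digit $0$, but their offsets from $a$ are numbers $g_j\in[0,\alpha^{j+1}]$ that depend on the entire tail of the digit expansion of $a$ beyond position $j$. Consequently, in logarithmic scale the forbidden intervals do \emph{not} recur with period $\log(1/\alpha)$: their positions modulo $\log(1/\alpha)$ drift with $j$ in a way governed by $a$ itself. Weyl equidistribution of $\log\lambda+k\log\beta$ modulo $\log(1/\alpha)$ only forces the sequence to enter any \emph{fixed} interval infinitely often; it gives no contradiction against a union of intervals whose positions vary with the scale (and one would additionally have to ensure the hits occur at scales $j$ where the digit of $a$ actually is $0$). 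There is also a structural objection: your argument nowhere uses $\alpha<\sqrt2-1$ or the Pisot hypothesis, so if it worked it would prove rationality for every $\alpha<1/2$, essentially Conjecture \ref{conj1} for central Cantor sets, which the paper does not claim and leaves open. The paper's mechanism for (ii) is different: irrationality yields (Lemma \ref{lem-4.1}) that for \emph{every} $u\in[0,\frac{1-2\alpha}{\alpha}]$ there is a translate $c$, lying in $\pi\big(\overline{\{\sigma^nz:n\in\N\}}\big)$, with $u\C_\beta+c\subseteq\C_\alpha$, together with the dimension bound $\dim_H\pi\big(\overline{\{\sigma^nz:n\in\N\}}\big)\geq 1-\dim_H\C_\alpha>0$; the hypothesis $\alpha<\sqrt2-1$ then enters through Proposition \ref{pro-4.1}: for $u^*=\frac{1-\alpha}{1+\alpha}$ the representation $u^*=(1-\alpha)\sum_i w_i\alpha^i$ with $w_i\in\{0,\pm1\}$ is unique, which forces the coding of $c$ to be the periodic point $(01)^\infty$ and contradicts the dimension bound. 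Nothing in your sketch plays the role of this uniqueness/rigidity step.

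Part (iii) as outlined would also fail. The separation claim is false as stated: nonzero finite $\{0,\pm1\}$-combinations of powers of $\alpha$ are \emph{not} bounded away from $0$ (already $\alpha^n\to0$); Garsia's lemma bounds nonzero integer-coefficient combinations of powers of the Pisot number $\theta=1/\alpha$ away from zero, which at scale $n$ gives only a bound of order $\alpha^n$. Moreover, elements of $\C_\alpha-\C_\alpha$ are infinite series rather than finite combinations, and you give no route from $\lambda(s-s')\in\C_\alpha-\C_\alpha$ to a \emph{monic} integer relation for $1/\beta$ with conjugates inside the unit disk --- a difficulty you acknowledge but do not resolve. The paper's proof of (iii) is of a completely different nature: by Salem--Zygmund (Theorem \ref{thm-5.1}), $\C_\alpha$ is a set of uniqueness; this class is closed under affine images and subsets (Theorem \ref{thm-5.2}), so $\C_\beta$ is one too and hence $1/\beta$ is Pisot; rationality then follows by constructing the averaged measure $\eta=\frac1b\int_0^b\mu\circ h_u^{-1}\,du$ supported on $\C_\alpha$, whose Fourier coefficients vanish at infinity by Wiener's theorem, contradicting Menshov's theorem (Theorem \ref{thm-5.3}). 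Finally, two further items are promised rather than proved: in (i) your upgrade from $\Q$ to $\N$ (``should pin the inner endpoints'') is not an argument --- the paper instead replaces $\beta$ by $\gamma=\beta^\ell$ with $\ell$ prime so that $\sqrt\alpha<\frac{1-2\alpha}{1-2\gamma}$, finds $m,n$ with $\alpha<\lambda\gamma^m/\alpha^n<\frac{1-2\alpha}{1-2\gamma}$, and derives a contradiction by comparing the forced hole of length $\geq\alpha^n(1-2\alpha)$ with the longest hole $\lambda\gamma^m(1-2\gamma)$ of the image; and the counterexample needed for the last claim of (ii) is constructed explicitly in the paper (Lemma \ref{ex-1}), not merely asserted to exist.
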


The following result is an extension of  Theorem \ref{thm-1.4}(iii).
\begin{thm}
\label{thm-1.5}
Assume that $\theta=1/\alpha$ is a Pisot number $>2$. Furthermore  in \eqref{e-1.1}, assume that $d=1$, $2\leq \ell<\theta$,  $\alpha_iR_i(x)=\alpha x$ and  $a_i\in \Z[\theta]$ for $1\leq i\leq \ell$, here $\Z[\theta]$ denotes the ring of $\theta$ over $\Z$.  If  $F$ can be affinely embedded into $E$, then  $\log \beta_j/\log \alpha\in \Q$ and $1/\beta_j$ is a Pisot number for $j=1,\ldots, m$.
\end{thm}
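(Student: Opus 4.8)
The plan is to reduce to the case $F\subseteq E$, to read off the rational dependence $\log\beta_j/\log\alpha\in\Q$ together with the field membership $\beta_j\in\Q(\theta)$ from a renormalization analysis that exploits the Pisot property of $\theta$, and finally to deduce that $1/\beta_j$ is a Pisot number by a short Galois-theoretic computation.

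First I would normalize the embedding. If $g(x)=\lambda x+c$ is an affine map with $g(F)\subseteq E$, then $g(F)$ is the attractor of the conjugated IFS $\{g\psi_jg^{-1}\}$, and since $d=1$ the linear part of $g\psi_jg^{-1}$ is again $\beta_jO_j$; thus $g(F)\subseteq E$ is self-similar with the same contraction ratios $\beta_j$, and I may assume $F\subseteq E$. Next I would record the arithmetic coding
\[
E=\left\{\sum_{k=0}^{\infty}d_k\theta^{-k}:\ d_k\in D\right\},\qquad D=\{a_1,\dots,a_\ell\}\subseteq\Z[\theta],
\]
so that differences of points of $E$ have digits in the finite set $D-D\subseteq\Z[\theta]$. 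The crucial input is Garsia's separation lemma for the Pisot number $\theta$: there is $c>0$ such that every nonzero $P=\sum_{m=0}^{n}c_m\theta^{m}\in\Z[\theta]$ with $|c_m|$ bounded by a fixed constant satisfies $|P|\ge c$, uniformly in $n$. Indeed $|P|=|N(P)|/\prod_{i\ge2}|P^{(i)}|\ge 1/\prod_{i\ge2}|P^{(i)}|$, and each conjugate $|P^{(i)}|=|\sum_m c_m(\theta^{(i)})^m|$ is bounded since $|\theta^{(i)}|<1$. Together with $\ell<\theta$ (which forces $\dim_HE=\log\ell/\log\theta<1$) this endows $E$ with a rigid, uniformly $\theta$-adically separated local structure.

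The heart of the argument is a renormalization step. Since $F\subseteq E$, every cylinder $\psi_J(F)$ with $J=j_1\cdots j_n$ is an affine copy of $F$ of diameter $\beta_J\,\mathrm{diam}(F)$ contained in $E$, so $E$ contains copies of the fixed shape $F$ at all scales $\beta_J$. Blowing $E$ up by the power of $\theta$ matching such a scale lands a copy of $F$ inside a microset of $E$; using the expansion above and Garsia's lemma, these microsets are locally finite, $c$-separated unions $\bigcup_{v}(E+v)$ of translates of $E$ with translation vectors $v\in\Z[\theta]$, so $E$ admits only finitely many local configurations near a copy of $F$ of bounded relative size. Now if $\log\beta_j/\log\theta\notin\Q$ for some $j$, then $\{\,n\log(1/\beta_j)\bmod\log\theta\,\}_{n\ge0}$ is equidistributed, and $E$ would contain copies of $F$ at a dense set of relative scales; this density is incompatible with the finitely-many-configurations rigidity coming from Garsia separation, a contradiction. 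Hence $\log\beta_j/\log\theta\in\Q$, equivalently $\log\beta_j/\log\alpha\in\Q$, and I write $\beta_j=\theta^{-p/q}$ with $p,q$ positive integers, so $\beta_j^{q}=\theta^{-p}$. Tracking the renormalization limit at the matched integer scale $\theta^{-p}$ then pins the defining data of the limiting copy into the number field $\Q(\theta)$, giving $\beta_j\in\Q(\theta)$. Establishing these two facts — commensurability and field membership from the renormalization limit — is where essentially all the difficulty lies, and is the step I expect to be the main obstacle.

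Granting $\beta_j\in\Q(\theta)$ and $\beta_j^{q}=\theta^{-p}$, the Pisot conclusion is immediate. Set $\gamma=1/\beta_j\in\Q(\theta)\cap(1,\infty)$, so that $\gamma^{q}=\theta^{p}$. Since $\theta^{p}\in\Z[\theta]$ is an algebraic integer and $\gamma$ is a root of $x^{q}-\theta^{p}$, $\gamma$ is integral over $\Z[\theta]$ and hence over $\Z$, so $\gamma$ is an algebraic integer. For each embedding $\sigma_i:\Q(\theta)\to\mathbb{C}$ with $\sigma_i(\theta)=\theta^{(i)}$ we have $|\sigma_i(\gamma)|=|\theta^{(i)}|^{p/q}$, which equals $\theta^{p/q}=\gamma>1$ for the identity embedding but is $<1$ for every $i\ge2$ because $\theta$ is Pisot. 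Thus every conjugate of $\gamma$ other than $\gamma$ itself lies strictly inside the unit disk, so $\gamma=1/\beta_j$ is a Pisot number; in particular $\log\beta_j/\log\alpha\in\Q$, as required, and the same argument applies to each $j=1,\dots,m$.
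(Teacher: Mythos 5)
Your reduction to $F\subseteq E$, your use of Garsia's lemma (this is exactly the paper's Lemma \ref{lem-5.6}), and your closing Galois computation are all sound, but the proposal has a genuine gap exactly where you flag it: the claimed contradiction between ``copies of $F$ at a dense set of relative scales'' and ``finitely many local configurations'' is an assertion, not an argument. Indeed, the paper \emph{proves} the very statement you hope is absurd: under the irrationality assumption it shows (its claim \eqref{e-5.7}) that for every $u\in(0,1/{\rm diam}(F)]$ there exists $c_u$ with $uF+c_u\subseteq E+\F$, where $\F=\Lambda\cap[0,1]$ is precisely the finite set that your Garsia-separation argument produces. Having affine copies of $F$ at \emph{all} small scales inside a fixed finite union of translates of $E$ is not self-contradictory --- a ``configuration'' is just a compact set, and nothing in the finiteness of the configuration list forbids it from containing copies of $F$ at every scale. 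A genuine extra rigidity input is needed, and this is where the paper invokes the theory of sets of uniqueness: by Salem--Zygmund (Theorem \ref{thm-5.1}) $E$ is a set of uniqueness, hence so is $E+\F$ (Theorem \ref{thm-5.2}); and a compact set of uniqueness cannot contain translates of $uF$ for all small $u>0$ (Remark \ref{rem-5.1}), which is proved by averaging the pushforwards of the natural continuous measure $\mu$ on $F$ over $u$, showing via Wiener's theorem that the resulting measure on $E+\F$ has Fourier coefficients vanishing at infinity, and contradicting Menshov's theorem (Theorem \ref{thm-5.3}). Your sketch contains no substitute for this harmonic-analytic step (nor for the alternative Hochman--Shmerkin equidistribution route mentioned in the paper's introduction), so the core of the theorem remains unproved.

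A second, dependent gap: your Pisot deduction needs both $\beta_j^q=\theta^{-p}$ \emph{and} the field membership $\beta_j\in\Q(\theta)$, and the latter is also delegated to the unproven renormalization limit. It is not a cosmetic hypothesis: $\gamma^q=\theta^p$ alone does not force $\gamma$ to be Pisot (take $\theta=3$ and $\gamma=\sqrt{3}$, whose conjugate $-\sqrt{3}$ lies outside the unit disc). The paper sidesteps this entirely: in the homogeneous case it deduces that $1/\beta$ is Pisot directly from the converse direction of Salem--Zygmund, because $F$, being affinely embedded in the set of uniqueness $E$, is itself a set of uniqueness; in the inhomogeneous case it considers the IFSs $\{\psi_1^n\circ\psi_j^m,\ \psi_j^m\circ\psi_1^n\}$ to conclude that $\beta_1^{-n-u}$ is Pisot for all $n$, and then passes from ``$\xi^p$ Pisot'' to ``$\xi$ Pisot'' by a transitivity-of-the-Galois-group argument. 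Some reduction of this kind is also needed to handle distinct ratios $\beta_j$ and reflections $O_j$, which your sketch does not address.
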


Our results are related to one of the conjectures of Furstenberg about the intersections of Cantor sets \cite{Fur69}. Let $p\in \N$ with $p\geq 2$. Following Furstenberg,
 we call  $A$ a {\it Cantor $p$-set} if $A$ is the attractor of an IFS $\{x/p+a_i\}_{i=1}^\ell$ on $\R$, where $\{a_i: 1\leq i\leq \ell\}$ is a proper subset of $\{0, 1, \ldots,p-1\}$ containing at least two digits. Furstenberg conjectured that if $p, q$ are not powers of the same integer (i.e.,  $\frac{\log q}{\log p}\not \in \mathbb{Q})$, then
 $$
 \dim_H(A\cap f(B))\leq \max\{0, \dim_HA+\dim_HB-1\},
 $$
where $A$ is an arbitrary Cantor $p$-set and $B$ a Cantor $q$-set, $f$ is any affine map on $\R$.  So far this conjecture is still open in its full generality. As a corollary of Theorems \ref{thm-1.1} and \ref{thm-1.5}, we have  the following  related result, although  it is still far from Furstenberg's  conjecture.
\begin{thm}
Suppose that  $p, q\geq 2 $ are not powers of the same integer. Then for any  Cantor $p$-set $A$ and Cantor $q$-set $B$, we have
$$\sup_f\dim_H(A\cap f(B))<\min\{\dim_HA, \dim_HB\},$$
  where the supremum is taken over the set of  $C^1$ diffeomorphisms  on $\R$.
\end{thm}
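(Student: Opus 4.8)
The plan is to derive this as a corollary of Theorems~\ref{thm-1.1} and \ref{thm-1.5}, applied symmetrically in the two directions $A\hookrightarrow B$ and $B\hookrightarrow A$. First I would record the relevant features of the Cantor sets. Since a Cantor $p$-set uses a \emph{proper} subset of $\{0,\ldots,p-1\}$ containing at least two digits, one automatically has $p,q\ge 3$, so $\theta:=p$ and $\theta':=q$ are integers $>2$ and hence Pisot numbers, while the digit sets have cardinalities $\ell,n$ with $2\le \ell< p$ and $2\le n< q$. Writing the generating IFS of $A$ as $\{x/p+a_i\}_{i=1}^{\ell}$ with $a_i\in\{0,\ldots,p-1\}\subset\Z=\Z[p]$, these are homotheties of ratio $1/p$ with integer translation parts; taking $V=(0,p)$ shows the OSC, since $f_i(V)=(a_i,a_i+1)$ are disjoint subintervals of $V$. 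Hence $\dim_H A$ equals its self-similar dimension $\log\ell/\log p$, and likewise $\dim_H B=\log n/\log q$. In particular the standing hypotheses of both theorems are satisfied with $E=A$, $F=B$, and also with the roles reversed.

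Second, I would exploit that we work on $\R=\R^1$, where the condition number $\kappa(M)$ of any nonzero $1\times 1$ matrix equals $1$. Thus for every $C^1$ diffeomorphism $f$ and every $x$ we have $\kappa(D_x(f^{-1}))=1$, so ${\rm Diff}^1_1(\R)$ is the \emph{full} group of $C^1$ diffeomorphisms of $\R$. Consequently the uniform bound supplied by Theorem~\ref{thm-1.1} (taken with $L=1$) upgrades automatically to a strict bound on $\sup_f$ taken over \emph{all} $C^1$ diffeomorphisms, which is exactly the supremum appearing in the statement.

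The substantive step is to rule out both affine embeddings. Applying Theorem~\ref{thm-1.5} with $E=A$: here $\theta=1/\alpha=p>2$ is Pisot, $d=1$, $2\le\ell<p=\theta$, each map is $x\mapsto x/p+a_i$ with $a_i\in\Z[\theta]$, so all hypotheses hold. If $B$ could be affinely embedded into $A$, the theorem would force $\log\beta_j/\log\alpha\in\Q$ for the contraction ratio $\beta_j=1/q$ of $B$, that is $\log q/\log p\in\Q$, contradicting the assumption that $p,q$ are not powers of the same integer; hence $B$ cannot be affinely embedded into $A$. Interchanging $p$ and $q$ and applying Theorem~\ref{thm-1.5} with $E=B$ shows in the same way that $A$ cannot be affinely embedded into $B$.

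Finally I would combine these with Theorem~\ref{thm-1.1}. Since $B$ does not affinely embed into $A$, Theorem~\ref{thm-1.1} (with $E=A$, $F=B$) and the first two observations give $\sup_f\dim_H(A\cap f(B))<\dim_H B$ over all $C^1$ diffeomorphisms $f$. For the complementary bound I use the identity $\dim_H(A\cap f(B))=\dim_H(B\cap f^{-1}(A))$, valid because $f$ is bi-Lipschitz on compact sets and hence dimension-preserving; as $f$ ranges over all $C^1$ diffeomorphisms so does $g=f^{-1}$. Applying Theorem~\ref{thm-1.1} with $E=B$, $F=A$ (legitimate since $A$ does not affinely embed into $B$) yields $\sup_g\dim_H(B\cap g(A))<\dim_H A$, whence $\sup_f\dim_H(A\cap f(B))<\dim_H A$; taking the smaller of the two bounds gives the claim. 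The genuine difficulty is carried entirely by the cited theorems, so here the delicate points are purely organizational: first, passing from the per-class ${\rm Diff}^1_L$ estimate to a bound over \emph{all} diffeomorphisms, which is exactly where the one-dimensional fact $\kappa\equiv 1$ is indispensable; and second, the necessity of \emph{both} non-embeddings—if either Cantor set affinely embedded into the other, the supremum would attain that set's dimension and strictness against the minimum would fail.
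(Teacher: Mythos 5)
Your proposal is correct and follows exactly the route the paper intends: the theorem is stated there as a corollary of Theorems~\ref{thm-1.1} and \ref{thm-1.5}, and you supply precisely the needed verifications (OSC and dimension hypotheses for Cantor $p$-sets, $p,q\ge 3$ so that $\theta>2$ is Pisot with integer digits in $\Z[\theta]$, the one-dimensional fact $\kappa\equiv 1$ making ${\rm Diff}^1_L(\R)$ the full diffeomorphism group, and the symmetric application via $g=f^{-1}$ to get strictness below both dimensions). No gaps; the organizational details you flag as delicate are handled correctly.
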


For the convenience of the readers, we illustrate the rough ideas in the proofs of Theorem \ref{thm-1.4} (ii) and (iii). Assume that $\C_\beta$ can be affinely embedded into $\C_\alpha$ but $\log \beta/\log \alpha\not\in \Q$. Then by using the self-similarity structure of $\C_\alpha$, $\C_\beta$ and the irrationality of $\log \beta/\log \alpha$, we can  show that for any $\lambda\in  (0, \frac{1-2\alpha}{\alpha}]$, there exists $c=c(\lambda)\in \C_\alpha$ such that
\begin{equation}
\label{e-1.3}
\lambda \C_\beta+c\subset \C_\alpha.
\end{equation}
Furthermore, we can show that for any $c$ so that \eqref{e-1.3} holds,  the symbolic coding of $c$ can not be periodic.
To derive a contradiction, we first consider the case that $\alpha<\sqrt{2}-1$. By considering the beta expansions in base $\alpha$, we can show that there exists $u\in(0, \frac{1-2\alpha}{\alpha}]$, so that there is a unique $c\in \C_\alpha$ so that $u+c\in \C_\alpha$. Furthermore, the symbolic coding of such $c$ is periodic. However by \eqref{e-1.3}, we should have $u\C_\beta+c\subset \C_\alpha$, and thus the symbolic coding of such $c$ can not be periodic. This leads to a contradiction.
Next we consider the case that  $1/\alpha$ is a Pisot number $>2$. Our argument involves classic harmonic analysis. By a well-known result of Salem and Zygmund,  $\C_\alpha$ is a set of uniqueness (cf. Definition \ref{de-5.1}) and hence $\C_\alpha$ does not support any Borel probability measure whose Fourier transform vanishes at infinity (cf. Theorem \ref{thm-5.2}). However, we can use \eqref{e-1.3} to construct a measure on $\C_\alpha$ whose Fourier transform vanishes at infinity, leading to a contradiction.

 We remark that  Theorem \ref{thm-1.4} extends a previous result in \cite{FRW12}: if $\C_\beta$ can be affinely embedded into $\C_{1/3}$, then $1/\beta$ should be an integer power of $3$. The idea used in \cite{FRW12} can be extended to  prove that $\log \beta/\log\alpha \in \Q$ if $\alpha\leq 1/3$ and $\C_\beta$ can be affinely embedded into $\C_\alpha$; however it can not deal with the case $\alpha>1/3$. After we got an initial draft of this paper, Pablo Shmerkin informed us an alternative dynamical approach in proving  the first part of Theorem \ref{thm-1.4}(iii), which is based on  the  general development in \cite{HoSh12} about equidistributions of fractal measures.

 The paper in organized as follows. In Section \ref{S-2}, we prove Theorem \ref{thm-1.1}. In  Section \ref{S-3}, we prove  Theorem \ref{thm-1.3} and Theorem \ref{thm-1.4}(i). In  Section \ref{S-4}, we prove Theorem \ref{thm-1.4}(ii). In Section \ref{S-5}, we prove Theorem \ref{thm-1.4}(iii) and Theorem \ref{thm-1.5}.

 \section{Relation between affine embeddings and $C^1$-embeddings}
 \label{S-2}

In this section we study the relation between $C^1$-embeddings and affine embeddings of self-similar sets, and prove Theorem \ref{thm-1.1}.

Let  $\Phi=\{\phi_i=\alpha_iR_i+a_i\}_{i=1}^\ell$ and $\Psi=\{\psi_j=\beta_jO_j+b_j\}_{j=1}^m$  be two IFSs of the form \eqref{e-1.1}, and $E, F$  the corresponding attractors. Assume that $\Phi$ satisfies the OSC, and the Hausdorff dimension of $F$ equals its self-similar dimension, i.e., $\dim_HF=s$ with $\sum_{i=1}^m\beta_i^s=1$. With loss of generality, assume that $$\alpha_1=\min\{\alpha_i: 1\leq i\leq \ell\},\qquad  \beta_1=\min\{\beta_j: 1\leq j\leq m\}.$$

Write $\phi_I=\phi_{i_1}\circ\cdots\circ \phi_{i_n}$ and $\alpha_I= \alpha_{i_1}\cdots \alpha_{i_n}$ for $I= i_1\ldots i_n\in \{1,\ldots, \ell\}^n$. Similarly, we use the abbreviations $\psi_J$ and $\beta_J$ for $J\in \{1,\ldots, m\}^n$.

For any $n\in \N$, let $s_n$ be the unique positive number satisfying
\begin{equation}
\label{e-2.0}
\sum_{J\in \{1,\ldots,m\}^n: J\neq 1^n} \beta_J^{s_n}=1.
\end{equation}
 That is, $(\sum_{i=1}^m\beta_i^{s_n})^n-\beta_1^{ns_n}=1$. Then
\begin{equation}
\label{e-2.1}
\sum_{J\in \Gamma}\beta_{J}^{s_n}\leq 1 \quad \mbox{for any proper subset $\Gamma$ of $\{1,\ldots, m\}^n$}.
\end{equation}
Clearly, $\lim_{n\to \infty}s_n=s$.

For any $0<r<\alpha_1$, denote
$$
{\mathcal A}_r:=\{I=i_1\ldots i_n\in \{1,\ldots, \ell\}^n:\; n\in \N,\; \alpha_{i_1}\ldots \alpha_{i_n}\leq r<\alpha_{i_1}\ldots \alpha_{i_{n-1}}\}.
$$

\begin{lem}
\label{lem-2.1}
There exists $N_0\in \N$ such that for any $0<r<\alpha_1$ and  $I\in \A_r$,
$$\#\{J\in \A_r:\; {\rm dist}(\phi_I(E), \phi_{J}(E))\leq r\}\leq N_0,$$
where $\# A$ denotes the cardinality of $A$.
\end{lem}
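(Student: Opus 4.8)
The plan is to prove this by a standard bounded-neighbour (volume packing) argument based on the open set condition. Fix a nonempty bounded open set $V\subset\R^d$ realizing the OSC for $\Phi$, so that the $\phi_i(V)$ are pairwise disjoint subsets of $V$. I would first record two background facts: (a) $E\subseteq\overline V$, which follows because $\phi_i(\overline V)\subseteq\overline{\phi_i(V)}\subseteq\overline V$ forces the compact set $\overline V$ to contain the attractor $E$; and (b) for any two incomparable words $I,J$ (neither a prefix of the other) the sets $\phi_I(V)$ and $\phi_J(V)$ are disjoint, obtained by factoring through the longest common prefix and using injectivity of the similitudes together with the disjointness of the $\phi_i(V)$. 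Since $\A_r$ is an antichain, fact (b) shows that $\{\phi_I(V):I\in\A_r\}$ is a pairwise disjoint family.

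Next I would pin down the sizes of the pieces indexed by $\A_r$. By definition, any $I=i_1\dots i_n\in\A_r$ satisfies $\alpha_I\le r<\alpha_{i_1\dots i_{n-1}}$, whence $\alpha_I=\alpha_{i_n}\alpha_{i_1\dots i_{n-1}}>\alpha_1 r$; thus $\alpha_1 r<\alpha_I\le r$. Consequently $\mathrm{diam}(\phi_I(E))\le r\,\mathrm{diam}(E)$ and $\mathrm{diam}(\phi_I(\overline V))\le r\,\mathrm{diam}(\overline V)$, while $\mathrm{vol}(\phi_I(V))=\alpha_I^d\,\mathrm{vol}(V)\ge(\alpha_1 r)^d\,\mathrm{vol}(V)$, where $\mathrm{vol}(V)\in(0,\infty)$ because $V$ is nonempty, open and bounded.

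The counting step is then a packing estimate. Fix $I\in\A_r$ and a point $z\in\phi_I(E)$, and suppose $J\in\A_r$ satisfies ${\rm dist}(\phi_I(E),\phi_J(E))\le r$. Then there are $p\in\phi_I(E)$ and $q\in\phi_J(E)$ with $|p-q|\le r$, and for every $w\in\phi_J(\overline V)$ the triangle inequality (using $q\in\phi_J(E)\subseteq\phi_J(\overline V)$ and $z,p\in\phi_I(E)$) gives
$$|w-z|\le\mathrm{diam}(\phi_J(\overline V))+|q-p|+\mathrm{diam}(\phi_I(E))\le C_0\,r,\qquad C_0:=1+\mathrm{diam}(E)+\mathrm{diam}(\overline V),$$
where $E\subseteq\overline V$ is what lets us control these diameters. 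Hence every such $\phi_J(V)$ lies in the ball $B(z,C_0 r)$. Since these sets are pairwise disjoint (a subfamily of $\{\phi_J(V):J\in\A_r\}$) and each has volume at least $(\alpha_1 r)^d\,\mathrm{vol}(V)$, comparing their total volume with that of $B(z,C_0 r)$ yields $N\cdot(\alpha_1 r)^d\,\mathrm{vol}(V)\le\omega_d(C_0 r)^d$, where $N$ is the number of admissible $J$ and $\omega_d$ is the volume of the unit ball in $\R^d$. The factor $r^d$ cancels, giving $N\le N_0:=\omega_d C_0^d/(\alpha_1^d\,\mathrm{vol}(V))$, a constant independent of $r$ and of $I$, as required (and $N_0\ge1$ since $J=I$ is always counted).

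The only genuinely delicate points, on which I would concentrate the write-up, are the two background facts of the first paragraph: the antichain disjointness of $\{\phi_I(V)\}_{I\in\A_r}$, and the reduction from a distance bound on the attractor pieces $\phi_J(E)$ to uniform containment of the open pieces $\phi_J(V)$ in one ball, which is exactly what makes $E\subseteq\overline V$ indispensable. Everything else is an elementary volume comparison; the key structural input is that the OSC converts the geometric smallness of the $\phi_J(E)$ into genuine disjoint $d$-dimensional volume carried by the $\phi_J(V)$.
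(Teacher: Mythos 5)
Your proof is correct and follows essentially the same route as the paper's: the OSC makes $\{\phi_J(V)\}_{J\in\A_r}$ pairwise disjoint, the $\A_r$-pieces all have size comparable to $r$, and a volume-packing comparison inside a ball of radius $O(r)$ gives a bound independent of $r$ and $I$. The only cosmetic difference is that the paper lower-bounds each $\mathrm{vol}(\phi_J(V))$ via an inscribed ball $B_1\subset V$ and upper-bounds via a circumscribed ball $B_2\supset V$, whereas you use $\mathrm{vol}(V)$ and $\mathrm{diam}(\overline V)$ directly.
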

\begin{proof}
Since $\Phi$ satisfies the OSC, there exists a non-empty bounded open set $V\subset \R^d$ such that $\phi_i(V)$ ($1\leq i\leq \ell$) are disjoint subsets of $V$.
Clearly, $\bigcup_{i=1}^\ell\phi_i(\overline{V})\subseteq \overline{V}$;  hence $E\subseteq \overline{V}$.  It is not hard to check that for any $0<r<\alpha_1$,\; $\phi_I(V)$ ($I\in \A_r$) are disjoint subsets of $V$.
Since $V$ is a bounded open set, there are two closed balls $B_1, B_2$ such that $B_1\subset V\subset B_2$. Let $R_1, R_2$ denote the radii of $B_1$ and $B_2$ respectively.

 Now fix $0<r<\alpha_1$ and  $I\in \A_r$.  Let $J_1, \ldots, J_k$ be elements in $\A_r$ so that ${\rm dist}(\phi_I(E), \phi_{J_t}(E))\leq r$ for $t=1,\ldots, k$. Then ${\rm dist}(\phi_I(\overline{V}), \phi_{J_t}(\overline{V}))\leq r$. Hence
 $\phi_{J_t}(V)$ ($1\leq t\leq k$) are contained in a ball of radius $4rR_2+r$, and each of them contains a ball  of radius $\geq r \alpha_1R_1$.  A volume argument shows that
 $$
 k\leq \frac{(1+4R_2)^d}{\alpha_1^d R_1^d}.
 $$
 This finishes the proof of the lemma  by taking  $N_0$ to be an integer greater than the right-hand side of the above inequality.
\end{proof}

For any $d\times d$ real  matrix $M$, we use $\|M\|$ to denote the usual norm of $M$, and  $\sm M\sm$ the smallest singular value of $M$, i.e.,
\begin{equation}
\label{e-M1}
\begin{split}
\|M\| &=\max\{|Mv|: \;
v\in \R^d, |v|=1\} \quad \mbox{and}\\
 \sm M\sm &=\min\{|Mv|: \;
v\in \R^d, |v|=1\}.
\end{split}
\end{equation}

 Recall that ${\rm Diff^1_L(\R^d)}$ denotes the collection of all $C^1$ diffeomorphisms $f$ on $\R^d$ so that the condition number of $D_x(f^{-1})$ does not exceed $L$ (i.e. $\| D_x(f^{-1})\| \leq L\sm D_x(f^{-1}) \sm$)  for any $x\in E$. The following proposition plays a key role in our proof of Theorem \ref{thm-1.1}.
 \begin{prop}
 \label{pro-2.1}
 Let $n\in \N$ and  $f\in {\rm Diff}^1_L(\R^d)$ for some $L>1$. Assume that
 $$
 \dim_H(E\cap f(F))>s_n,$$
 where $s_n$ is given as in \eqref{e-2.0}. Let $N_0$ be the integer given in Lemma \ref{lem-2.1}.
 Then there exist affine mappings $g_1,\ldots, g_k$ with $k\leq N_0$ such that the linear parts of $g_i$ have condition number not exceeding $L$ and
 $$
 \psi_{J}(F)\cap \left(\bigcup_{i=1}^kg_i(E)\right)\neq \emptyset, \qquad \forall J\in \{1,\ldots, m\}^n.
 $$
 \end{prop}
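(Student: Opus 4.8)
The plan is to transfer everything to the domain $F$ and then combine a covering estimate (to locate the right pieces) with a blow-up/compactness argument (to manufacture the affine maps). Set $K=E\cap f(F)$ and $K'=f^{-1}(K)=f^{-1}(E)\cap F\subseteq F$. Since $f$ is a $C^1$ diffeomorphism it is bi-Lipschitz on a compact neighbourhood of the compact set $K$, so $\dim_H K'=\dim_H K>s_n$. I would call a word $J'\in(\{1,\dots,m\}^n)^N$ (a concatenation of $N$ length-$n$ blocks) \emph{good} if $K'\cap\psi_{J'J}(F)\neq\emptyset$ for every $J\in\{1,\dots,m\}^n$; equivalently, the normalized piece $\psi_{J'}^{-1}(K'\cap\psi_{J'}(F))$ meets every level-$n$ cylinder of $F$. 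The two things to establish are: (i) good words exist with $|J'|$ arbitrarily large; (ii) any sufficiently deep good word can be renormalized into a family of at most $N_0$ affine maps with the required property.

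For (i) the key is \eqref{e-2.1}. Suppose, to the contrary, that every block-word of depth $\ge N_\ast$ were \emph{bad}. If $J'$ is bad there is some $J$ with $K'\cap\psi_{J'J}(F)=\emptyset$, hence
\[
K'\cap\psi_{J'}(F)\subseteq\bigcup_{\tilde J\neq J}\psi_{J'\tilde J}(F),
\]
a union over the \emph{proper} subset $\{\tilde J:\tilde J\neq J\}\subsetneq\{1,\dots,m\}^n$. Descending this one-block refinement repeatedly from each depth-$N_\ast$ word and invoking \eqref{e-2.1} at every step (it bounds the $s_n$-sum of the retained ratios by $1$), I obtain covers of $K'$ by cylinders of arbitrarily small diameter whose total $s_n$-mass stays uniformly bounded; this forces $\mathcal H^{s_n}(K')<\infty$, so $\dim_H K'\le s_n$, contradicting $\dim_H K'>s_n$. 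Hence good words occur at every depth, and I may fix good $J'_t$ with $|J'_t|\to\infty$.

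For (ii), fix a good $J'_t$ and set $r_t:={\rm diam}\big(f(\psi_{J'_t}(F))\big)\to 0$. For each $J$ choose $z_{t,J}\in K\cap f(\psi_{J'_t J}(F))\subseteq E$; writing $z_{t,J}\in\phi_{I}(E)$ with $I\in\A_{r_t}$, this cylinder of $E$ meets $f(\psi_{J'_t}(F))$, so by Lemma \ref{lem-2.1} there are at most $N_0$ such cylinders $\phi_{I^t_1}(E),\dots,\phi_{I^t_{k_t}}(E)$, $k_t\le N_0$, collecting all the $z_{t,J}$. For each retained $i$ set
\[
g^t_i:=\psi_{J'_t}^{-1}\circ f^{-1}\circ\phi_{I^t_i},
\]
a $C^1$ map; a direct point-chase shows each $J$ is assigned some $i(t,J)$ with $g^t_{i(t,J)}(E)\cap\psi_J(F)\neq\emptyset$, so $\bigcup_i g^t_i(E)$ already meets every $\psi_J(F)$. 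To affinize, note that $\psi_{J'_t}^{-1}$ and $\phi_{I^t_i}$ are conformal and, up to the bounded local stretching of $f$, one has $\alpha_{I^t_i}\asymp r_t\asymp\beta_{J'_t}$. Replacing $f^{-1}$ on the tiny set $\phi_{I^t_i}(E)$ by its linearization at a base point $x^t_i\in\phi_{I^t_i}(E)\subseteq E$ yields an affine map $\tilde g^t_i$ approximating $g^t_i$ uniformly on $E$ with error $o(1)$, whose linear part has scaling of order $1$ and condition number equal to $\kappa(D_{x^t_i}f^{-1})\le L$ (conformal factors leave the condition number unchanged). The translation parts are bounded because $g^t_i(E)$ meets $F$. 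Passing to a subsequence along which $k_t\equiv k$ and the assignment $i(\cdot,J)$ stabilizes, I have $\tilde g^t_i\to g_i$ (affine, condition number $\le L$, invertible), and a limit-point argument gives $g_{i(J)}(E)\cap\psi_J(F)\neq\emptyset$ for every $J$, which is the claim.

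The main obstacle is step (i): extracting, purely from $\dim_H K'>s_n$, arbitrarily deep cylinders whose blow-up meets \emph{every} level-$n$ piece of $F$. This is exactly where the threshold $s_n$ and \eqref{e-2.1} are indispensable, and the argument must tolerate the possible overlapping of the $\psi_J(F)$ (no separation is assumed for $\Psi$), which is why I phrase it as an upper covering bound rather than a mass lower bound. A secondary technical point is the scale matching $\alpha_{I^t_i}\asymp\beta_{J'_t}$ (modulo the stretching of $f$), which keeps the renormalized maps nondegenerate in the limit so that the $g_i$ are genuine invertible affine maps of condition number at most $L$.
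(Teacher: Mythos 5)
Your proposal is correct and follows essentially the same route as the paper's proof: the same covering argument driven by \eqref{e-2.1} to produce arbitrarily deep ``good'' words of $\Psi$, the same application of Lemma \ref{lem-2.1} at a scale comparable to $\beta_{W}$ to bound the number of relevant cylinders of $E$ by $N_0$, and the same linearization of $f^{-1}$ on those cylinders followed by a compactness/subsequence argument to extract the limiting affine maps with condition number at most $L$. The only differences are cosmetic (you measure distances on the $E$-side via ${\rm diam}(f(\psi_{J'}(F)))$, while the paper transports everything by $h=f^{-1}$ and uses the distortion bound \eqref{e-2.3}), and they do not affect the structure of the argument.
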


 \begin{proof} Denote $h=f^{-1}$. Then $h$ is a $C^1$ diffeomorphism on $\R^d$. Hence there exists $\delta>0$ so that for any $x,y\in E$ with $|h(x)-h(y)|<\delta$, we have $|(D_xh)(x-y)|/2\leq |h(x)-h(y)|$
 and hence
 \begin{equation}
 \label{e-2.3}
 \sm D_xh\sm \cdot |x-y|/2\leq  |h(x)-h(y)|.
 \end{equation}

 Since $h$ is a $C^1$ diffeomorphism on $\R^d$,   $\dim_H(F\cap h(E))=\dim_H(f(F)\cap E)>s_n$. We claim that for any $j\in \N$, there exists a word $W_j$ on $\{1,\ldots,m\}$ with length $|W_j|\geq j$ such that
\begin{equation}
\label{e-2.2}
 \psi_{W_jJ}(F)\cap h(E)\neq \emptyset, \qquad \forall\; J\in \{1,\ldots, m\}^n.
 \end{equation}
 Assume on the contrary that the above claim is false. Then there exists $p_0\in \N$ such that for any word $W$ on $\{1,\ldots,m\}$ with length $|W|\geq p_0$, there exists at least one
 $J\in \{1,\ldots, m\}^n$ such that $\psi_{WJ}(F)\cap h(E)= \emptyset$. For $q\in \N$, denote
 $$
 t(q):=\sum_{UJ_1\cdots J_q\in \Gamma_q} ({\rm diam}(\psi_{UJ_1\cdots J_q}(F)))^{s_n}=\sum_{UJ_1\cdots J_q\in \Gamma_q} ({\rm diam}(F))^{s_n} (\beta_{UJ_1\cdots J_q})^{s_n},
 $$
 where  $\Gamma_q$ denotes the set of  words $UJ_1\ldots J_q$ on $\{1,\ldots, m\}$ so that $|U|=p_0$, $|J_i|=n$  for $1\leq i\leq q$,  and $\psi_{UJ_1\ldots J_q}(F)\cap h(E)\neq \emptyset$. Notice that for any word $UJ_1\ldots J_{q-1}\in \Gamma_{q-1}$, there exists at least one $J$ with $|J|=n$ so that $UJ_1\ldots J_{q-1}J\not\in  \Gamma_{q}$. Hence by \eqref{e-2.1}, we have $t(q)\leq t(q-1)\leq \ldots\leq t(1)$. Since for each $q\in \N$, $\{\psi_{I}(F):\; I\in \Gamma_q\}$ is a cover of $F\cap h(E)$, we have $\dim_H(F\cap h(E))\leq s_n$, leading to a contradiction. This proves our claim \eqref{e-2.2}.

According to \eqref{e-2.2}, we have
\begin{equation}
\label{e-2.4}
 \psi_{J}(F)\cap \psi_{W_j}^{-1}\circ h(E)\neq \emptyset\qquad \mbox{ for all } j\in \mathbb{N},\; J\in \{1,\ldots, m\}^n.
\end{equation}
Denote $\rho=\min_{x\in E} \sm D_xh\sm$. Pick  $p_1\in \mathbb{N}$ so that
$$2{\rm diam}(F) (\max_{1\leq i\leq m}\beta_i)^{p_1}< \min\left\{\frac{2\delta}{\rho},\alpha_1\right\} \cdot \rho.$$
For any $j\geq p_1$, denote $r_j=2{\rm diam}(F) \beta_{W_j}\rho^{-1}$. Then $0<r_j<\min\{\frac{2\delta}{\rho},\alpha_1\}$. By \eqref{e-2.3}, we derive that if $x, y\in E$ with $|x-y|>r_j$, then $|h(x)-h(y)|>\rho r_j/2$. To see this, notice that $\rho r_j/2<\delta$; if $|h(x)-h(y)|<\rho r_j/2$ then by \eqref{e-2.3}, $$|x-y|\leq  2|h(x)-h(y)|/\sm D_xh\sm\leq 2(\rho r_j/2)/\rho=r_j,$$ leading to a contradiction. Hence for $x, y\in E$ with $|x-y|>r_j$,
$$|\psi_{W_j}^{-1}\circ h(x)-\psi_{W_j}^{-1}\circ h(y)|=\beta_{W_j}^{-1}|h(x)-h(y)|> \beta_{W_j}^{-1}\rho r_j/2={\rm diam}(F).$$
As a consequence, if $I, J\in \A_{r_j}$ satisfy ${\rm dist}(\phi_I(E), \phi_J(E))>r_j$, then
$${\rm dist} (\psi_{W_j}^{-1}\circ h \circ \phi_I(E),\; \psi_{W_j}^{-1}\circ h \circ \phi_J(E))>{\rm diam}(F);$$
thus at most one of $\psi_{W_j}^{-1}\circ h \circ \phi_I(E)$, $\psi_{W_j}^{-1}\circ h \circ \phi_J(E)$ can intersect $F$.
This combining with Lemma \ref{lem-2.1} yields that
$$\#\{I\in \A_{r_j}:\; \psi_{W_j}^{-1}\circ h\circ  \phi_I(E)\cap F\neq \emptyset\}\leq N_0.$$
Let $I_{j,1}, \ldots, I_{j,{k_j}}$ be all the words in $ \A_{r_j}$ so that $\psi_{W_j}^{-1}\circ h\circ  \phi_{I_{j, t}}(E)\cap F\neq \emptyset$ for $1\leq t\leq k_j$.
Then $k_j\leq N_0$. By \eqref{e-2.4},
\begin{equation}
\label{e-2.5}
 \psi_{J}(F)\cap \left(\bigcup_{t=1}^{k_j}\psi_{W_j}^{-1}\circ h\circ \phi_{I_{j, t}}(E)\right)\neq \emptyset\qquad \mbox{ for all } j\geq p_1,\; J\in \{1,\ldots, m\}^n.
\end{equation}
Notice that $\alpha_1 r_j\leq \alpha_{I_{j, t}}\leq r_j$ for $1\leq t\leq k_j$ and  $r_j=2{\rm diam}(F) \beta_{W_j}\rho^{-1}$. We  have
\begin{equation}
\label{e-2.5'}
2{\rm diam}(F) \rho^{-1}\alpha_1<\beta_{W_j}^{-1}\alpha_{I_{j, t}}\leq 2{\rm diam}(F) \rho^{-1}.
\end{equation}

 For each $j\geq p_1$ and $1\leq t\leq k_j$, we denote $h_{j,t}:=\psi_{W_j}^{-1}\circ h\circ \phi_{I_{j, t}}$. The mappings $h_{j,t}$ can be viewed as a kind of rescalings of $h$.  Fix $x_0\in E$. Define affine mappings $A_{j,t}$ on $\R^d$ by
 $$
 A_{j,t}(x)=h_{j,t}(x_0)+ D_{x_0}h_{j,t}(x-x_0),
 $$
 where $D_{x_0}h_{j,t}$ denotes the differential of $h_{j,t}$ at $x_0$.  By our assumption on $h$, we have
 \begin{equation}
 \label{e-2.5''}
 \|D_{x_0}h_{j,t}\|/\sm D_{x_0}h_{j,t}\sm\leq L.
 \end{equation}
 According to \eqref{e-2.5'}, there exist positive constants $c_1, c_2$ (independent of $j$ and $t$) so that
 \begin{equation}
 \label{e-2.5-1}
  c_1\leq \sm D_{x_0}h_{j,t} \sm \leq \| D_{x_0}h_{j,t}\|\leq c_2.
 \end{equation}

 Since $h$ is a $C^1$-diffeomorphism on $\R^d$ and $E$ is compact, there exists a sequence  $(d_j)$ of positive numbers   with $d_j\downarrow 0$ as $j\to \infty$, such that
 $$
 |h(u)-h(v)-(D_vh)(u-v)|\leq d_j |u-v|
 $$
 for any $u, v\in E\mbox{  with } |u-v|\leq r_j {\rm diam}(E)$. It follows that for any $j\geq p_1$, $1\leq t\leq k_j$ and $x\in E$,
 $$
 |h\circ \phi_{I_{j, t}}(x)-h\circ \phi_{I_{j, t}}(x_0)-(D_{x_0} (h\circ \phi_{I_{j, t}})) (x-x_0)|\leq d_j \alpha_{I_{j, t}} |x-x_0|\leq d_j r_j{\rm diam}(E).
 $$
 Hence we have for each $j\geq p_1$,
 \begin{equation}
 \label{e-2.5-2}
 \begin{split}
 &\sup_{x\in E,\; 1\leq t\leq k_j} \| A_{j,t}(x)-h_{j,t}(x)\|\\
  & \mbox{}\quad=\beta_{W_j}^{-1}\sup_{x\in E,\; 1\leq t\leq k_j}\left|h\circ\phi_{I_{j, t}}(x)-h\circ \phi_{I_{j, t}}(x_0)-(D_{x_0} (h\circ \phi_{I_{j, t}})) (x-x_0)\right|\\
  &\mbox{}\quad  \leq d_j \beta_{W_j}^{-1}r_j{\rm diam}(E)= d_jC,
 \end{split}
 \end{equation}
 where  $C:=2{\rm diam}(E){\rm diam}(F) \rho^{-1}$.

 For $j\geq p_1$ and $1\leq t\leq k_j$, since $h_{j,t}(E)\cap F\neq \emptyset$, by \eqref{e-2.5-1}-\eqref{e-2.5-2}, we see that
 the translation part of  $A_{j,t}$ is uniformly bounded. Combining this fact with
 \eqref{e-2.5-1}, we see that for each sequence of indices $(j_\ell, t_\ell)$ with $1\leq t_\ell\leq k_{j_\ell}$,
 there exists a subsequence $(j_{\ell'}, t_{\ell'})$ so that $A_{j_{\ell'},t_{\ell'}}$ converges to some affine map $g$ on $\R^d$; by \eqref{e-2.5''} and
 \eqref{e-2.5-2},  the linear part of $g$ has condition number $\leq L$, and
 $h_{j_{\ell'},t_{\ell'}}(E)$   converges to  $g(E)$ in Hausdorff distance as $\ell'\to \infty$.

  As a refinement of the above argument, we see that there exists a subsequence $(j_\ell)$ of $\N$ such that $k_{j_\ell}\equiv k$ for some $k\leq N_0$, and moreover for each $1\leq t\leq k $, $A_{j_\ell, t}\to g_t$ for some affine map $g_t$ as $\ell\to \infty$.
  In particular,
  $$\bigcup_{t=1}^{k_{\ell}}\psi_{W_{j_\ell}}^{-1}\circ h\circ \phi_{I_{j_\ell, t}}(E)=\bigcup_{t=1}^k h_{j_\ell, t}(E)\to \bigcup_{t=1}^k g_t(E)$$
 in Hausdorff distance as $\ell \to \infty$.  Now the proposition follows from \eqref{e-2.5}.
  \end{proof}

\begin{proof}[Proof of Theorem \ref{thm-1.1}] It suffices to show that for any $L>1$, if $$
 \sup_{f\in {\rm Diff^1_L(\R^d)}} \dim_H(E\cap f(F))=\dim_HF,
 $$
then $F$ can be affinely embedded into $E$. Indeed assume that the above identity holds. Then by Proposition \ref{pro-2.1}, for any $n\in \N$, there exists a family of affine mappings $\{g_{i}^{(n)}\}_{i=1}^{k_n}$ so that  $k_n\leq N_0$,  the linear parts of $g^{(n)}_i$ have condition number $\leq L$, $g_{i}^{(n)}(E)\cap F\neq \emptyset$, and
$$
\psi_{J}(F)\cap \left(\bigcup_{i=1}^{k_n}g_i^{(n)}(E)\right)\neq \emptyset \qquad \forall J\in \{1,\ldots, m\}^n.
$$
A compactness argument shows that there exist affine maps $g_1,\ldots, g_k$ with $k\leq N_0$ such that
$F\subseteq \bigcup_{i=1}^kg_i(E)$. A version of Baire category theorem states that there exist an open set $V\subset \R^d$ and $i\in \{1,\ldots, k\}$ such that
$\emptyset \neq F\cap V\subset g_i(E)$.  However, $F\cap V\supset \psi_J(F)$ for some word $J$ on $\{1,\ldots, m\}$; hence $F\subset \psi_J^{-1}\circ g_i(E)$, i.e.,  $F$ can  be affinely embedded into $E$.
\end{proof}

\section{Affine embeddings and the logarithmic commensurability}
\label{S-3}
In this section, we will prove  Theorem \ref{thm-1.3} and Theorem \ref{thm-1.4}(i).

\begin{proof}[Proof of Theorem \ref{thm-1.3}]
Since  $F$ can be affinely embedded into $E$,  there is an affine map $g(x)=Mx+b$ with $\det(M)\neq 0$ such that $g(F)\subseteq E$.

We first consider the case that  $\beta_j=\beta$ for $j=1,\ldots, m$.  Assume on the contrary that  $\frac{\log \beta}{\log\alpha}\not\in \Q$. We show below that   $\dim_H(E-E)\ge 1$, which implies that $$2\dim_HE=\dim_HE\times E\geq \dim_H(E-E)\geq 1,$$
i.e., $\dim_HE\geq 1/2$, leading to a contradiction.

 Let $\delta:=\min_{i\neq j} d(\phi_i(E),\phi_j(E))$ and $\Gamma:=\max\{\text{diam}(MO(F)):O\in {\mathcal O}(d)\}$, where ${\mathcal O}(d)$ denotes the collection of orthogonal transformations on $\R^d$. Then
$0<\delta,\Gamma<+\infty$. Fix $p,N\in \mathbb{N}$ such that $\alpha^p<\frac{\delta}{\Gamma}$ and $\frac{\log \beta}{\log \alpha}N\ge p$.

Now for $n\in \mathbb{N}$ with $n\ge N$, $g(\psi_{1^n}(F))\subset g(F)\subseteq E$. Notice that $\psi_{1^n}(F)$ is of the form $\beta^nO_1^n(F)+e_n$ for some $e_n\in\R^d$.  We have $\beta^n M O_1^n(F)+b'\subseteq E$ with $b':=Me_n+b$.
Let $\ell_n$ be the integer part of $\frac{\log \beta}{\log \alpha}n$. Then $${\rm diam}(\beta^n M O_1^n(F)+b')\le \beta^n\Gamma<\delta \alpha^{\ell_n-p}.$$
By the definition of $\delta$, we see that $\beta^n M O_1^n(F)+b'$ intersects $\psi_{I}(E)$ for only one $I\in \{1,\ldots, \ell\}^{\ell_n-p}$, and thus
$$\beta^n M O_1^n(F)+b'\subseteq \psi_{I}(E).$$
Hence there exists $P_n\in {\mathcal O}(d)$ and $r_n\in \R^d$ such that $\beta^n M O_1^n(F)\subseteq \alpha^{\ell_n-p}P_n(E)+r_n$. This implies
$$\alpha^{p+\{\gamma n\}}P_n^{\tau}MO_1^n(F)-\frac{r_n}{\alpha^{\ell_n-p}}\subseteq E,$$
where $\gamma:=\frac{\log \beta}{\log \alpha}$, and $\{x\}$ denotes the fractional part of $x$. Fix a nonzero vector $v\in F-F$.
Then
$$E-E\supseteq \alpha^{p+\{\gamma n\}}P_n^{\tau}MO_1^n(F-F)\supseteq \alpha^{s+\{\gamma n\}}P_n^{\tau}MO_1^nv$$
for $n\ge N$. Denote $U:=\{|x_1-x_2|^2:\;x_1,x_2\in E\}$. Then
\begin{equation}\label{eq-11}
U\supseteq \{\alpha^{2(p+\{\gamma n\})}|MO_1^nv|^2:n\ge N\}.
\end{equation}

Now we consider the closure $W$ of $\{(e^{2\pi i n\gamma},O_1^n):n\ge N\}$ in the compact Lie group $S^1\times {\mathcal O}(d)$.
It is clear that $W$ is a closed subgroup of $S^1\times {\mathcal O}(d)$. Hence by Cartan Theorem (cf.  \cite[Theorem 3.3.1]{Pri77}),   $W$ is also a Lie group. Let $W_0$ be the connected component of $W$ containing the unit element $(1,I)$.
Then $W_0$ is a connected compact Lie group, and it is also open in $W$ (cf. \cite[Lemma 2.1.4]{Pri77}).  It implies that $W$ has only finitely many connected branches. Let $\pi:S^1\times {\mathcal O}(d)\rightarrow S^1$ be the naturally coordinate projection. Since $\gamma$ is an irrational number,  $\pi(W)=S^{1}$, and hence $\pi(W_0)$ is a subgroup of $S^1$ with positive Haar measure (for $W$ has only finitely many connected branches).  It follows that $\pi(W_0)=S^1$.
Then there is one parameter subgroup $t\in \mathbb{R}\mapsto (e^{2\pi i t},\phi(t))\in W_0$, where $\phi:\mathbb{R}\rightarrow {\mathcal O}(d)$ is an analytic group homomorphism (cf. \cite[Theorems 2.2.10, 2.2.12]{Pri77}). Therefore $\phi$ is an analytic function.
By  \eqref{eq-11}, we have
\begin{equation}\label{eq-2}
U\supseteq \left\{\alpha^{2(p+\{t\})}|MOv|^2:(e^{2\pi i t},O)\in W\right\}\supseteq\left\{\alpha^{2(p+\{t\})}|M\phi(t)v|^2:t\in \mathbb{R}\right\}.
\end{equation}
Put $f(t)=\alpha^{2(p+t)}|M\phi(t)v|^2$, $t\in\mathbb{R}$. Then $f$ is a positive analytic function on $\mathbb{R}$ since $\phi$ is analytic.
Notice that $\lim_{t\rightarrow +\infty}f(t)=0$. Hence $f$ is not constant on any  non-degenerate interval of $\mathbb{R}$.
Then $J:=\{ f(t):t\in [0,\frac{1}{2})\}$ is a non-degenerate interval of $\mathbb{R}$. Clearly $U\supseteq J$. Thus
$\dim_H(E-E)\ge \dim_HU\ge \dim_HJ=1$.

Next we consider the case that $\beta_j$, $1\leq j\leq m$, might be different.  Without loss of generality, we show that $\log \beta_1/\log \alpha\in \Q$.
Since $F$ is not a singleton, there exists $j\geq 2$ such that the fixed point of  $\psi_j$ is different that of $\psi_1$. Let $F_1$ be the attractor of the IFS
$\{\psi_1\circ \psi_j, \psi_j\circ \psi_1\}$. Then $F_1\subset F$ is not a singleton and can be affinely embedded into $E$, hence $\log (\beta_1\beta_j)/\log \alpha\in \Q$. Similarly considering the IFS
$\{\psi_1^2\circ \psi_j, \psi_j\circ\psi_1^2\}$,  we have $\log (\beta_1^2\beta_j)/\log \alpha\in \Q$. Hence $\log \beta_1/\log \alpha\in \Q$.
This ends the proof of Theorem \ref{thm-1.3}.
\end{proof}

Applying Theorem \ref{thm-1.4} to the case that  $E, F$ are  central Cantor sets with $0<\beta<\alpha <\frac{1}{4}$, we see that  if $\C_\beta $  can be affinely embedded into $\C_\alpha$ then $\frac{\log \beta}{\log
\alpha}\in \mathbb{Q}$. Theorem \ref{thm-1.4}(i) sharpens this result.

\begin{proof}[Proof of Theorem \ref{thm-1.4}(i)] Let $0<\beta<\alpha <\frac{1}{4}$. If $\frac{\log \beta}{\log
\alpha}\in \mathbb{N}$, then it is clear that $\C_\beta\subseteq \C_\alpha$, hence $\C_\beta$ can be affinely embedded into $\C_\alpha$.

 Conversely, assume that $\C_\beta$ can be affinely embedded into  $\C_\alpha$.
 Assume that $\frac{\log \beta}{\log \alpha}\not \in \mathbb{N}$. We will derive a contradiction as below.

 Since $0<\alpha<1/4$, we have $\sqrt{\alpha}<1-2\alpha$. If
$\frac{\log \beta}{\log \alpha}\not \in \mathbb{N}$, then there
exists a prime number $\ell\in \mathbb{N}$ such that $\frac{\log
\gamma}{\log \alpha}\not \in \mathbb{N}$ and
$\sqrt{\alpha}<\frac{1-2\alpha}{1-2\gamma}$, where
$\gamma=\beta^\ell$. Since $\C_\beta$ contains a nontrivial affine
image of $\C_\gamma$, $\C_\alpha$ contains a nontrivial affine image
of $\C_\gamma$. That is, there exists $a\in [0,1]$ and $\lambda\neq 0$
such that $a+\lambda \C_\gamma\subseteq \C_\alpha$. We can assume that $\lambda>0$ (since  $\lambda+(-\lambda)\C_\gamma=\lambda\C_\gamma$).

Next, we are to show that there exist $m,n\in \mathbb{N}$ such that
\begin{equation}\label{eee-1}
\alpha< \lambda
\frac{\gamma^m}{\alpha^n}<\frac{1-2\alpha}{1-2\gamma}.
\end{equation}
Notice that $\frac{\gamma^m}{\alpha^n}=e^{ (m\log
\gamma-n\log \alpha)}=e^{\log \alpha (m\frac{\log \gamma}{\log
\alpha}-n)}$. When $\frac{\log \gamma}{\log \alpha}$ is an irrational
number, then $\left\{m\frac{\log \gamma}{\log \alpha}-n:m,n\in
\mathbb{N}\right\}$ is dense in $\mathbb{R}$. So there exist $m,n\in
\mathbb{N}$ such that $\alpha< \lambda
\frac{\gamma^m}{\alpha^n}<\frac{1-2\alpha}{1-2\gamma}$ since
$\alpha<\frac{1-2\alpha}{1-2\gamma}$. Next assume that $\frac{\log \gamma}{\log \alpha}\in \mathbb{Q}$. Since  $\frac{\log \gamma}{\log \alpha}\not \in \mathbb{N}$,
there exist two coprime integers $p> q\ge 2$
such that  $\gamma=\alpha^{{p}/{q}}$. Since $
\lambda\alpha^{{k}/{q}}\searrow 0$ when $k\nearrow +\infty$ and
$ \lambda\alpha^{{k}/{q}}\nearrow +\infty$ when $k\searrow
-\infty$, there exist $r\in \mathbb{Z}$ such that
$\lambda\alpha^{{r}/{q}}<\frac{1-2\alpha}{1-2\gamma}$ and
$\lambda\alpha^{{(r-1)}/{q}}\ge \frac{1-2\alpha}{1-2\gamma}$. Thus
$\lambda\alpha^{{r}/{q}}\ge
\frac{1-2\alpha}{1-2\gamma}\alpha^{{1}/{q}}\ge
\frac{1-2\alpha}{1-2\gamma}\alpha^{{1}/{2}}>\alpha$. That is,
$\alpha<\lambda\alpha^{{r}/{q}}<\frac{1-2\alpha}{1-2\gamma}$.
Hence we can find $m,n\in \mathbb{N}$ such that $$\alpha< \lambda
\frac{\gamma^m}{\alpha^n}=\lambda\alpha^{{r}/{q}}<\frac{1-2\alpha}{1-2\gamma},$$
since $\left\{ \frac{\gamma^m}{\alpha^n}:m,n\in \mathbb{N}\right\}=\left\{
\alpha^{{k}/{q}}: k\in \mathbb{Z}\right\}$. This proves \eqref{eee-1}.

 Notice that $\Phi=\{\phi_1=\alpha x,\; \phi_2=\alpha x+(1-\alpha)\}$ is a generating IFS of $\C_\alpha$.
Denote $$\A_k:=\{\phi_I(\C_\alpha):\; I\in \{1,2\}^k\}$$ for  $k\in \N$. Clearly, $\A_k$ is a cover of $\C_\alpha$, and any two different sets in $\A_k$ have a distance $\geq \alpha^{k-1}(1-2\alpha)$.

Let $(m,n)$ be a pair in $\N^2$ so that \eqref{eee-1} holds. Then
$$
H:=a+\lambda \gamma^m\C_\gamma\subset a+\lambda\C_\gamma\subseteq \C_\alpha=\bigcup_{A\in \A_{n+1}}A.
$$
Since ${\rm diam}(H)=\lambda\gamma^m> \alpha^{n+1}$ by \eqref{eee-1},  $H$ intersects at least two elements in $\A_{n+1}$. Therefore $H$ contains a ``hole'' of length $\geq \alpha^n(1-2\alpha)$.  However by the geometric structure of $\C_\gamma$, the longest ``hole'' in $H$ is of length $\lambda \gamma^m(1-2\gamma)$, which is less than
 $\alpha^n(1-2\alpha)$ by \eqref{eee-1}. Hence we derive a contradiction.  This finishes the proof of Theorem \ref{thm-1.4}(i).
\end{proof}

\section{Unique beta expansions and affine embeddings}
\label{S-4}
In this section, we first  show that
 there exist $0<\beta<\alpha<\frac{1}{2}$ with $\frac{\log
\beta}{\log \alpha}\in \Q\backslash \mathbb{N}$ such that $\C_\beta$ can be affinely embedded into $\C_\alpha$. Then we derive some unusual behavior if $\C_\beta$ can be affinely embedded into $\C_\alpha$ and $\frac{\log
\beta}{\log \alpha}\not \in \mathbb{Q}$. In the end, we combine these behaviors and certain uniqueness property of  beta-expansions to prove Theorem \ref{thm-1.4}(ii).

\begin{lem}\label{ex-1}
For  $k\ge 2$, let $\alpha_k>0$ be the unique positive
solution of the equation
$$\sqrt{x}=x+x^2+\cdots+x^k.$$
Set $\beta_k=\alpha_k^{{(2k+1)}/{2}}$.
Then $\frac{1}{2}>\alpha_2>\alpha_3>\cdots$,  $\lim
\limits_{k\rightarrow +\infty} \alpha_k=\frac{3-\sqrt{5}}{2}$, and $\C_{\beta_k}$ can be affinely embedded into $\C_{\alpha_k}$.
\end{lem}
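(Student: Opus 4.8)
The plan is to treat the three assertions separately, the embedding being the substantial one. For existence, uniqueness, monotonicity and the limit, set
\[
g_k(x)=x^{1/2}+x^{3/2}+\cdots+x^{(2k-1)/2}=\sum_{j=1}^{k}x^{(2j-1)/2},\qquad 0<x<1,
\]
so that dividing the defining equation by $\sqrt{x}$ turns it into $g_k(x)=1$. Each summand is strictly increasing, with $g_k(0^+)=0$ and $g_k(1^-)=k$, so the root $\alpha_k$ exists and is unique; and $g_2(1/2)=2^{-1/2}+2^{-3/2}=3\cdot 2^{-3/2}>1$ forces $\alpha_2<1/2$. Since $g_{k+1}(x)=g_k(x)+x^{(2k+1)/2}>g_k(x)$ on $(0,1)$, we get $g_{k+1}(\alpha_k)>1=g_{k+1}(\alpha_{k+1})$, hence $\alpha_{k+1}<\alpha_k$. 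As $k\to\infty$, $g_k\uparrow g_\infty(x)=\sqrt{x}/(1-x)$; each $\alpha_k$ exceeds $\tfrac{3-\sqrt5}{2}$ (because $g_k(\tfrac{3-\sqrt5}{2})<g_\infty(\tfrac{3-\sqrt5}{2})=1$), so the decreasing sequence converges to some $\alpha_\ast\in[\tfrac{3-\sqrt5}{2},\tfrac12)$; since $g_\infty-g_k=x^{(2k+1)/2}/(1-x)\to0$ uniformly on $[0,1/2]$, passing to the limit in $g_k(\alpha_k)=1$ gives $g_\infty(\alpha_\ast)=1$, i.e. $\sqrt{\alpha_\ast}=1-\alpha_\ast$, whence $\alpha_\ast=\tfrac{3-\sqrt5}{2}$.

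For the embedding write $\alpha=\alpha_k$, $\beta=\beta_k$, and let $\Phi=\{\phi_1(x)=\alpha x,\ \phi_2(x)=\alpha x+(1-\alpha)\}$ and $\Psi=\{\psi_1(x)=\beta x,\ \psi_2(x)=\beta x+(1-\beta)\}$ generate $\C_\alpha,\C_\beta$. The first move is to exploit the defining equation algebraically: from $\beta=\alpha^{(2k+1)/2}=\alpha^{k}\sqrt{\alpha}$ and $\sqrt\alpha=\alpha+\cdots+\alpha^k$ we obtain the two crucial identities
\[
\beta=\alpha^{k+1}+\alpha^{k+2}+\cdots+\alpha^{2k},\qquad \beta^2=\alpha^{2k+1}.
\]
The second identity invites passage to the second iterate $\{\psi_w:w\in\{1,2\}^2\}$ of $\Psi$, a four-map IFS of $\C_\beta$ with common ratio $\beta^2=\alpha^{2k+1}$ and translations $s_{11}=0$, $s_{12}=\beta-\beta^2$, $s_{21}=1-\beta$, $s_{22}=1-\beta^2$; we compare it with the length-$(2k+1)$ iterate $\{\phi_J:J\in\{1,2\}^{2k+1}\}$ of $\Phi$, whose maps are $\phi_J(x)=\alpha^{2k+1}x+t_J$ with $t_J=\phi_J(0)=(1-\alpha)\sum_{l:\,j_l=2}\alpha^{l-1}$. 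The plan is to find four words $J_{11},J_{12},J_{21},J_{22}\in\{1,2\}^{2k+1}$ and a dilation $h(x)=\lambda x$ making $\{\phi_{J_w}\}$ exactly the conjugate family $\{h\psi_w h^{-1}\}$. Conjugation by $h$ preserves the common ratio $\beta^2=\alpha^{2k+1}$, so only the translations must be matched, i.e. one needs to solve $t_{J_w}=\lambda\,s_w$ for $w\in\{11,12,21,22\}$.

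Here the defining equation does exactly the work required: take $\lambda=\frac{1-\alpha}{1-\beta}$ and
\[
J_{11}=1^{2k+1},\qquad J_{12}=1^{k+1}2^{k},\qquad J_{21}=2\,1^{2k},\qquad J_{22}=2\,1^{k}2^{k}.
\]
Reading off the digit positions that carry a $2$, one gets $t_{J_{11}}=0$, $t_{J_{21}}=1-\alpha$, and, using $\beta=\alpha^{k+1}+\cdots+\alpha^{2k}$, $t_{J_{12}}=(1-\alpha)\beta$ and $t_{J_{22}}=(1-\alpha)(1+\beta)$; a one-line check then verifies $t_{J_w}=\lambda s_w$ for all four $w$ (e.g. $\lambda s_{12}=\frac{1-\alpha}{1-\beta}\beta(1-\beta)=(1-\alpha)\beta$). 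Hence $\{\phi_{J_w}\}=\{h\psi_w h^{-1}\}$, so its attractor is $h(\C_\beta)=\lambda\C_\beta$. Finally, each $\phi_{J_w}$ is a composition of maps of $\Phi$, so $\phi_{J_w}(\C_\alpha)\subseteq\C_\alpha$; consequently every iterate $\phi_{J_{w_1}}\circ\cdots\circ\phi_{J_{w_n}}(\C_\alpha)$ lies in $\C_\alpha$, and these converge in Hausdorff distance to the attractor, which is closed in $\C_\alpha$. Therefore $\lambda\C_\beta\subseteq\C_\alpha$, i.e. the dilation $x\mapsto\frac{1-\alpha}{1-\beta}\,x$ affinely embeds $\C_{\beta_k}$ into $\C_{\alpha_k}$ (note incidentally $\log\beta_k/\log\alpha_k=(2k+1)/2\in\Q\setminus\N$, which is the point of the example).

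The only genuinely delicate step is the discovery of the four words, and this is exactly where the equation defining $\alpha_k$ is forced: the match $t_{J_{12}}=(1-\alpha)\beta$ requires $\beta$ to be a $0$–$1$ combination of $\alpha^0,\dots,\alpha^{2k}$, and $\sqrt\alpha=\alpha+\cdots+\alpha^k$ is precisely what puts $\beta=\alpha^{k+1}+\cdots+\alpha^{2k}$ into that form within the available $2k+1$ digits. Everything downstream — the value of $\lambda$, the words $J_{21},J_{22}$, and the conjugacy check — is then routine, and the abstract attractor argument lets us conclude the inclusion without verifying any gap or separation conditions by hand.
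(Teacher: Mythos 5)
Your proof is correct, and the heart of it is the same as the paper's: the same dilation factor $\lambda_k=\frac{1-\alpha_k}{1-\beta_k}$, and the same two consequences of the defining equation, $\beta_k^2=\alpha_k^{2k+1}$ and $\beta_k=\alpha_k^{k+1}+\cdots+\alpha_k^{2k}$, used to turn $\beta_k$-expansions into legal $\alpha_k$-expansions. The difference is in packaging. The paper takes an arbitrary point $z=\sum_i\epsilon_i(1-\beta_k)\beta_k^i\in\C_{\beta_k}$ and directly regroups the series for $\lambda_k z$, pairing the digits $(\epsilon_{2j},\epsilon_{2j+1})$ and expanding $\beta_k^{2j+1}$ via $\sqrt{\alpha_k}=\alpha_k+\cdots+\alpha_k^k$ into the exponents $(2k+1)j+k+1,\dots,(2k+1)j+2k$ --- which is exactly the digit pattern encoded by your four words $1^{2k+1}$, $1^{k+1}2^{k}$, $2\,1^{2k}$, $2\,1^{k}2^{k}$. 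You lift this pointwise computation to the level of IFSs: conjugate the second iterate of $\Psi$ by the dilation and identify it with a four-word sub-IFS of the $(2k+1)$-st iterate of $\Phi$, then conclude by invariance/uniqueness of attractors. Your route costs the (standard) observation that the attractor of an IFS built from words of $\Phi$ sits inside $\C_{\alpha_k}$, but it buys a cleaner separation of the combinatorics from the analysis, and it makes explicit the non-collision of exponents across blocks, which the paper's series regrouping leaves implicit. You also prove the assertions $\frac12>\alpha_2>\alpha_3>\cdots$ and $\lim_k\alpha_k=\frac{3-\sqrt5}{2}$ (via monotone convergence of $g_k$ to $\sqrt{x}/(1-x)$ together with uniform convergence on $[0,1/2]$); the paper's proof establishes only the embedding and omits these analytic claims entirely, so your write-up is in fact more complete on this point.
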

\begin{proof}
Fix $k\ge 2$.  Denote $\lambda_k=\frac{1-\alpha_k}{1-\beta_k}$. For
any $z=\sum_{i=0}^{+\infty} \epsilon_i(1-\beta_k)\beta_k^i\in \C_{\beta_k}$ with $\epsilon_i\in \{0,1\}$ for $i\geq 0$, we have
\begin{align*}
\lambda_k z &=\sum_{j=0}^\infty (1-\alpha_k)\Big(\epsilon_{2j}\beta_k^{2j}+\epsilon_{2j+1}\beta_k^{2j+1}\Big)\\
&=\sum_{j=0}^\infty
(1-\alpha_k)\Big(\epsilon_{2j}\alpha_k^{(2k+1)j}+\epsilon_{2j+1}\alpha_k^{(2k+1)j+k}\sqrt{\alpha_k}\Big)\\
&=\sum_{j=0}^\infty
(1-\alpha_k)\Big(\epsilon_{2j}\alpha_k^{(2k+1)j}+\epsilon_{2j+1}\alpha_k^{(2k+1)j+k}(\alpha_k+\cdots+\alpha_k^k)\Big)
\\
&=\sum_{j=0}^\infty
(1-\alpha_k)\Big(\epsilon_{2j}\alpha_k^{(2k+1)j}+\epsilon_{2j+1}\alpha_k^{(2k+1)j+k+1}+\cdots+\epsilon_{2j+1}\alpha_k^{(2k+1)j+2k}\Big).
\end{align*}
Hence $\lambda_kz\in \C_{\alpha_k}$. Thus $\lambda_k
\C_{\beta_k}\subset \C_{\alpha_k}$.
\end{proof}

\bigskip

Let $0<\beta<\alpha<1/2$. Let $(\Sigma,\sigma)$ denote the full shift over the alphabet $\{0,1\}$. That is, $\Sigma=\{0,1\}^\mathbb{N}$ and $\sigma$ is the left shift on $\Sigma$.
Let $\pi:\Sigma\rightarrow [0,1]$ be the coding
map defined as
$$\pi(z)=(1-\alpha)\sum_{i=0}^\infty z_i \alpha^i,\qquad z=(z_i)_{i=0}^\infty\in \Sigma.$$
 Clearly, $\pi$ is one-to-one and
$\pi(\Sigma)=\C_\alpha$. Notice that $\C_\alpha$ has a generating IFS $\{S_0, S_1\}$,  where $S_0(x)=\alpha x$, $S_1(x)=\alpha x +1-\alpha$. It is direct to check that
$$\pi(z)=\lim_{n\to \infty}S_{z_0}\circ\cdots \circ S_{z_{n-1}}(0)$$ for $z\in \Sigma$.

\begin{lem}\label{lem-4.1} Assume that $\frac{\log\beta}{\log \alpha}\not \in
\mathbb{Q}$ and there exist $a\ge 0, \;\lambda>0$ such that
\begin{align}\label{eq-1}
a+\lambda \C_\beta\subseteq \C_\alpha.
\end{align}
Let $z= \pi^{-1}(a)$. Then the following properties hold.
\begin{itemize}
\item[(i)] For any $n,k\in \mathbb{N}$, $\pi(\sigma^nz)+\lambda\frac{\beta^k}{\alpha^n}\C_\beta\subseteq \C_\alpha$
provided that $\frac{\lambda \beta^k}{\alpha^n}<\frac{1-2\alpha}{\alpha}$.
\item [(ii)] For any $u\in
[0,\frac{1-2\alpha}{\alpha}]$, there exists $w\in
\overline{\{\sigma^nz:n\in \mathbb{N}\}}$ such that
$$\pi(w)+u\C_\beta\subseteq \C_\alpha.$$
\item[(iii)]$\dim_H(\pi(\overline{\{\sigma^nz:n\in \mathbb{N}\}}))\ge 1-\dim_H \C_\alpha>0.$
\end{itemize}
\end{lem}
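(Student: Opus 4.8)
\medskip
\noindent\emph{Proof plan.}
The three assertions are proved in the stated order, each feeding into the next: part~(i) produces an explicit family of admissible rescalings of $\C_\beta$ sitting inside $\C_\alpha$, part~(ii) upgrades this family to an arbitrary scale by a density-plus-compactness argument, and part~(iii) converts the resulting abundance of copies into a Hausdorff-dimension lower bound through a difference set.

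For~(i), the plan is to start from \eqref{eq-1} and apply two elementary operations. First, since the leftmost level-$k$ subcopy satisfies $\beta^k\C_\beta\subseteq\C_\beta$, relation \eqref{eq-1} immediately gives $a+\lambda\beta^k\C_\beta\subseteq\C_\alpha$. Next I would zoom into $\C_\alpha$ by repeatedly applying the inverse branches of $\{S_0,S_1\}$, using the coding identity $\pi(\sigma^j z)=S_{z_j}(\pi(\sigma^{j+1}z))$. Writing $T_j:=\pi(\sigma^j z)+\lambda\beta^k\alpha^{-j}\C_\beta$, one has $T_0\subseteq\C_\alpha$ and ${\rm diam}\,T_j=\lambda\beta^k\alpha^{-j}$; the hypothesis $\lambda\beta^k\alpha^{-n}<(1-2\alpha)/\alpha$ keeps this diameter strictly below $1-2\alpha$ for every $j\le n-1$. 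Since two points of $\C_\alpha$ lying in different top-level pieces $\C_\alpha\cap[0,\alpha]$ and $\C_\alpha\cap[1-\alpha,1]$ are at distance $\ge 1-2\alpha$, the set $T_j$ must lie entirely inside the single piece $S_{z_j}(\C_\alpha)$, so $S_{z_j}^{-1}(T_j)=T_{j+1}\subseteq\C_\alpha$. Iterating from $j=0$ to $n-1$ yields the claim; this part is essentially bookkeeping.

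For~(ii), I would realize the target scale $u$ as a limit of the admissible scales from~(i). Put $\gamma=\log\beta/\log\alpha\notin\Q$, so that $\lambda\beta^k\alpha^{-n}=\lambda\alpha^{k\gamma-n}$; irrationality of $\gamma$ makes $\{k\gamma-n:k,n\in\N\}$ dense in $\R$, hence I can choose $k_i,n_i\to\infty$ with $\lambda\beta^{k_i}\alpha^{-n_i}\to u$, taking the approximation from below so that each term is $<(1-2\alpha)/\alpha$ and~(i) applies. By compactness of $\Sigma$, after passing to a subsequence $\sigma^{n_i}z\to w\in\overline{\{\sigma^n z:n\in\N\}}$; continuity of $\pi$ together with the closedness of $\C_\alpha$ then gives, for every fixed $c\in\C_\beta$, $\pi(w)+uc=\lim_i\big(\pi(\sigma^{n_i}z)+\lambda\beta^{k_i}\alpha^{-n_i}c\big)\in\C_\alpha$, i.e. $\pi(w)+u\C_\beta\subseteq\C_\alpha$. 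The case $u=0$ is trivial.

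For~(iii), write $Y=\overline{\{\sigma^n z:n\in\N\}}$. Since $1\in\C_\beta$, the witness $w_u$ from~(ii) yields $\pi(w_u)\in\pi(Y)$ and $\pi(w_u)+u\in\C_\alpha$, so $u=(\pi(w_u)+u)-\pi(w_u)\in\C_\alpha-\pi(Y)$; letting $u$ range over $[0,(1-2\alpha)/\alpha]$ shows that this whole interval lies in $\C_\alpha-\pi(Y)$, whence $\dim_H(\C_\alpha-\pi(Y))\ge 1$. The main obstacle is that the Hausdorff dimension of a difference set is \emph{not} bounded by the sum of the Hausdorff dimensions of the two factors; the way around it is that $\C_\alpha-\pi(Y)$ is a Lipschitz image of $\pi(Y)\times\C_\alpha$ and that $\C_\alpha$, being self-similar under the strong separation condition (as $\alpha<1/2$), satisfies $\overline{\dim}_B\C_\alpha=\dim_H\C_\alpha$. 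Applying the product inequality $\dim_H(A\times B)\le\dim_H A+\overline{\dim}_B B$ with $A=\pi(Y)$ and $B=\C_\alpha$ gives $1\le\dim_H\pi(Y)+\dim_H\C_\alpha$, that is $\dim_H\pi(Y)\ge 1-\dim_H\C_\alpha$; positivity is then immediate, since $\alpha<1/2$ forces $\dim_H\C_\alpha<1$.
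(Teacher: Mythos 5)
Your proposal is correct and follows essentially the same route as the paper: part (i) by the separation-of-cylinders argument (you iterate one level at a time where the paper descends $n$ levels at once, but the idea is identical), part (ii) by the same density-plus-compactness argument, and part (iii) via the difference set containing the interval $[0,\frac{1-2\alpha}{\alpha}]$. If anything, your part (iii) is more careful than the paper's, since you make explicit the inequality $\dim_H(A\times B)\le\dim_H A+\overline{\dim}_B B$ and the coincidence $\overline{\dim}_B\C_\alpha=\dim_H\C_\alpha$, which the paper's one-line deduction from $\dim_H(\C_\alpha-\pi(Y))\ge 1$ to $\dim_H\C_\alpha+\dim_H\pi(Y)\ge 1$ leaves implicit.
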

\begin{proof} Let $k, n\in \N$ satisfy $\lambda\beta^k<(1-2\alpha)
\alpha^{n-1}$.   Notice that  $a\in S_{z_0}\circ \cdots \circ
S_{z_{n-1}}(\C_\alpha)$ and  $${\rm dist}\big(\C_\alpha\setminus
S_{z_0}\circ \cdots \circ S_{z_{n-1}}(\C_\alpha),\; S_{z_0}\circ \cdots
\circ S_{z_{n-1}}(\C_\alpha)\big)\ge (1-2\alpha)\alpha^{n-1}.$$
Since $a+\lambda \beta^k \C_\beta\subseteq a+\lambda
\C_\beta\subseteq \C_\alpha$, and the diameter of  $a+\lambda \beta^k \C_\beta$ is less than $(1-2\alpha)
\alpha^{n-1}$, we have
$$a+\lambda\beta^k\C_\beta\subseteq S_{z_0}\circ \cdots S_{z_{n-1}}(\C_\alpha).$$
 Hence,  $(S_{z_0}\circ \cdots
S_{z_{n-1}})^{-1}(a)+\frac{\lambda\beta^k}{\alpha^n}\C_\beta\subseteq
\C_\alpha$, i.e., $\pi(\sigma^n z)+\frac{\lambda\beta^k}{\alpha^n}\C_\beta\subseteq
\C_\alpha$. This proves (i).

 To prove (ii), let $u\in
[0,\frac{1-2\alpha}{\alpha}]$. Since $\frac{\log\beta}{\log
\alpha}\not \in \mathbb{Q}$, there exist pairs $(k_i,n_i)\in
\mathbb{N}^2$ such that $\frac{\lambda
\beta^{k_i}}{\alpha^{n_i}}<\frac{1-2\alpha}{\alpha}$ and
$\frac{\lambda \beta^{k_i}}{\alpha^{n_i}}\rightarrow
u$ as $i\rightarrow +\infty$. By (i),
$$\pi(\sigma^{n_i}z)+\lambda\frac{\beta^{k_i}}{\alpha^{n_i}}\C_\beta\subseteq \C_\alpha.$$
 Let $w$ be an accumulation  point of $(\sigma^{n_i}z)$. Then we have
$\pi(w)+u\C_\beta\subseteq \C_\alpha$. This proves (ii).

By (ii), for any $u\in
[0,\frac{1-2\alpha}{\alpha}]$,
$$
u\C_\beta\subset \C_\alpha-\pi(\overline{\{\sigma^nz:n\in \mathbb{N}\}}).
$$
Hence
$$\left[0,\frac{1-2\alpha}{\alpha}\right]\subseteq \C_\alpha-\pi(\overline{\{\sigma^nz:n\in \mathbb{N}\}}).$$ It follows that
$\dim_H\Big(\C_\alpha-\pi \big( \overline{\{\sigma^nz:n\in \mathbb{N}\}}\big) \Big)\ge
1$, thus $$\dim_H \C_\alpha+\dim_H\pi \big(\overline{\{\sigma^nz:n\in \mathbb{N}\}}\big) \ge 1.$$
This finishes the proof of (iii).
\end{proof}

\begin{prop}
\label{pro-4.1}
Let $0<\alpha<\sqrt{2}-1$ and $\displaystyle u^*=\frac{1-\alpha}{1+\alpha}$. Set $z_i=(-1)^i$ for $i\geq 0$. Then we have the following statements.
\begin{itemize}
\item[(i)]   $z:=(z_i)_{i=0}^\infty$ is the unique element in $\{0, 1,-1\}^\N$ so that
\begin{equation}
\label{e-4.11}(1-\alpha)\sum_{i=0}^\infty z_i\alpha^i= u^*.
\end{equation}
\item[(ii)] Let $a=(1-\alpha)\sum_{i=0}^\infty \alpha^{2i+1}$ and $b=(1-\alpha)\sum_{i=0}^\infty \alpha^{2i}$. Then $(a, b)$ is the unique point in $\C_\alpha\times \C_\alpha$ so that $u^*=b-a$.
\end{itemize}
\end{prop}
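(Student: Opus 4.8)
The plan is to reduce both parts to a single unique-representation statement in base~$\alpha$ with the symmetric digit set $\{-1,0,1\}$, and then to exploit the self-similar structure of the value $u^*$ together with the threshold $\alpha<\sqrt2-1$. For the existence half of (i), a direct geometric-series computation does it: since $\sum_{i=0}^\infty(-1)^i\alpha^i=\frac{1}{1+\alpha}$, the alternating sequence $z_i=(-1)^i$ satisfies $(1-\alpha)\sum_{i=0}^\infty z_i\alpha^i=\frac{1-\alpha}{1+\alpha}=u^*$. For uniqueness it suffices, after dividing \eqref{e-4.11} by $1-\alpha$, to show that $y:=\frac{1}{1+\alpha}$ admits only one expansion $y=\sum_{i=0}^\infty w_i\alpha^i$ with $w_i\in\{-1,0,1\}$, namely $w_i=(-1)^i$.

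I would argue this by peeling off the leading digit. The tail $\sum_{i\ge1}w_i\alpha^i$ necessarily lies in $[-\frac{\alpha}{1-\alpha},\frac{\alpha}{1-\alpha}]$, so the value of $w_0$ is pinned down by the position of $y$ relative to this interval. The decisive point, and the only place where $\alpha<\sqrt2-1$ is used, is that $w_0=0$ is impossible: it would force $y\le\frac{\alpha}{1-\alpha}$, i.e. $\frac{1}{1+\alpha}\le\frac{\alpha}{1-\alpha}$, which rearranges to $\alpha^2+2\alpha-1\ge0$, that is $\alpha\ge\sqrt2-1$, contrary to hypothesis. The choice $w_0=-1$ is excluded more cheaply, since it would require the tail to equal $y+1>1>\frac{\alpha}{1-\alpha}$ (valid for all $\alpha<\frac12$). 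Hence $w_0=1$, and subtracting and dividing by $\alpha$ yields $\sum_{i\ge0}w_{i+1}\alpha^i=(y-1)/\alpha=-y$. Applying the symmetric dichotomy to $-y$ forces the next digit to be $-1$ and returns the remaining tail to $y$; iterating this self-similar alternation determines $w_i=(-1)^i$ for every $i$ and proves (i).

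Part (ii) is then a formal consequence. Writing any candidate $a',b'\in\C_\alpha$ as $a'=(1-\alpha)\sum_{i}\epsilon_i\alpha^i$ and $b'=(1-\alpha)\sum_i\eta_i\alpha^i$ with $\epsilon_i,\eta_i\in\{0,1\}$, the equation $b'-a'=u^*$ reads $(1-\alpha)\sum_i(\eta_i-\epsilon_i)\alpha^i=u^*$ with digits $\eta_i-\epsilon_i\in\{-1,0,1\}$. By part (i) this forces $\eta_i-\epsilon_i=(-1)^i$, whence $\eta_i=1,\ \epsilon_i=0$ for even $i$ and $\eta_i=0,\ \epsilon_i=1$ for odd $i$; this identifies $b'=b$ and $a'=a$ and gives uniqueness. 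That $(a,b)$ is indeed a solution follows from the geometric-series identity $b-a=\frac{(1-\alpha)^2}{1-\alpha^2}=\frac{1-\alpha}{1+\alpha}=u^*$.

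The main obstacle is the uniqueness in (i): one must set up the leading-digit dichotomy cleanly and, in particular, verify that the self-similar return $y\mapsto-y\mapsto y$ genuinely licenses the induction that pins down every digit. The computation that $\alpha=\sqrt2-1$ is exactly the borderline for ruling out $w_0=0$ is the conceptual heart; once (i) is in hand, (ii) is purely a matter of subtracting the two binary codings and reading off the digits.
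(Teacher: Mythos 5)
Your proposal is correct and follows essentially the same route as the paper: existence by the geometric series, uniqueness by forcing the leading digit through the tail bound $\frac{\alpha}{1-\alpha}<\frac{1}{1+\alpha}$ (exactly the threshold $\alpha<\sqrt{2}-1$) and iterating via the self-similar reduction, with (ii) deduced from (i) by subtracting the two $\{0,1\}$-codings. The only cosmetic difference is that you normalize to $y=\frac{1}{1+\alpha}$ and use the digit-set symmetry $w\mapsto -w$ to alternate between $y$ and $-y$, whereas the paper determines $w_0=1$, $w_1=-1$ directly by two tail estimates and then repeats with period two; the content is identical.
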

\begin{proof}
It is direct to verify \eqref{e-4.11}. Assume $w=(w_i)_{i=0}^\infty$ is a point in $\{0, 1,-1\}^\N$ so that  $(1-\alpha)\sum_{i=0}^\infty w_i\alpha^i= u^*$ (i.e.
$\sum_{i=0}^\infty w_i\alpha^i=1/(1+\alpha)$). We show below that $w=z$.

Since $0<\alpha<\sqrt{2}-1$, we have $2\alpha+\alpha^2<1$, and thus  $$\sum_{i=1}^\infty w_i\alpha^i\leq \sum_{i=1}^\infty \alpha^i=\frac{\alpha}{1-\alpha}<\frac{1}{1+\alpha}.$$
It follows that $w_0=1$. Similarly,
$$
w_0+\sum_{i=2}^\infty w_i\alpha^i\geq 1-\sum_{i=2}^\infty \alpha^i=1-\frac{\alpha^2}{1-\alpha}>\frac{1}{1+\alpha}.$$
It follows that $w_1=-1$.
Now we have $$\sum_{i=2}^\infty w_i\alpha^{i}=1/(1+\alpha)-(w_0+w_1\alpha)=\alpha^2/(1+\alpha).$$ Hence $\sum_{i=2}^\infty w_i\alpha^{i-2}={1}/{(1+\alpha)}.$
It follows that $w_2=1$ and $w_3=-1$. Repeating the above argument, we have $w=z$. This finishes the proof of (i).

To show (ii), we first notice that $u^*=b-a$. Now assume that $u^*=b'-a'$ for some pair $(b', a')\in \C_\alpha\times \C_\alpha$. Then there exist $e=(e_i)_{i=0}^\infty$ and $f=(f_i)_{i=0}^\infty\in \{0,1\}^\N$ such that $b'=\pi(f)$ and $a'=\pi(e)$. Hence
$$u^*=b'-a'=(1-\alpha)\sum_{i=0}^\infty(f_i-e_i)\alpha^i.$$
By (i), we have $f_i-e_i=(-1)^i$ for $i\geq 0$. This forces that $$e_i=\begin{cases} 1 &\text{ if $i$ is odd}\\
0 &\text{ if $i$ is even} \end{cases}\quad  \text{ and }\quad  f_i=\begin{cases} 0 &\text{ if $i$ is odd}\\
1 &\text{ if $i$ is even} \end{cases}.$$
Hence $b'=b$ and $a'=a$. This proves (ii) and we are done.
\end{proof}

\begin{proof}[Proof of Theorem \ref{thm-1.4}(ii)] Assume that $1/4<\alpha<\sqrt{2}-1$ and $0<\beta<\alpha$. Assume that $\C_\beta$ can be affinely embedded into $\C_\alpha$. By Lemma \ref{ex-1}, we see that it is possible that $\log \beta/\log\alpha\not \in \N$. In the following we show that $\log \beta/\log \alpha\in \Q$.

Assume on the contrary that $\log \beta/\log \alpha\not \in \Q$. Define $u^*$ as in Proposition \ref{pro-4.1}. Then $u^*\in (0,\frac{1-2\alpha}{\alpha})$.  By Lemma \ref{lem-4.1}(ii), there exists $a\in \C_\alpha$ such that \begin{equation}
\label{e-4.12}
a+u^*\C_\beta\subseteq \C_\alpha.
 \end{equation}
 In particular, $b:=a+u^*\in \C_\alpha$. Hence $u^*=b-a$ with $a, b\in \C_\alpha$.   By Proposition \ref{pro-4.1}, we must have $a=(1-\alpha)\sum_{i=0}^\infty \alpha^{2i+1}$. Let $z=\pi^{-1}(a)$. Then $z=(01)^\infty$ is a periodic point in $\Sigma=\{0,1\}^\infty$. Hence $\dim_H(\pi(\overline{\{\sigma^nz:n\in \mathbb{N}\}}))=0$. However, according to \eqref{e-4.12} and Lemma \ref{lem-4.1}(iii), we must have $\dim_H(\pi(\overline{\{\sigma^nz:n\in \mathbb{N}\}}))>0$. This leads to a contradiction.
\end{proof}

\section{Sets of uniqueness and affine embeddings in the Pisot case}
\label{S-5}
In this section, we prove Theorem \ref{thm-1.4}(iii) and Theorem \ref{thm-1.5}.
Our proofs make use of the theory of  sets of uniqueness for trigonometric series. In the following we give some necessary definitions and theorems (see, e.g., \cite{Sal63, KeLo87} for details).

\begin{de}
\label{de-5.1} A set $E\subseteq [0,2\pi]$ is called
a {\it set of uniqueness} if every trigonometric series $\sum_{n=0}^{+\infty}
\big( a_n\cos (nx)+b_n \sin (nx)\big)$ which converges to zero on  $[0,2\pi]\backslash E$ is identically $0$, i.e., $a_n=b_n=0$ for all $n\geq 0$. Otherwise $E$ is called  a set of multiplicity.
\end{de}
\begin{rem} It is clear that any subset of a set of uniqueness is
still a set of uniqueness.
\end{rem}

One  fundamental problem in classical harmonic analysis is to characterize the  sets of uniqueness. So far this problem is still open in its full generality. A major achievement  was made by Salem and Zygmund in 1955 to characterize when a homogeneous Cantor set is a set of uniqueness.

\begin{thm}[Salem and Zygmund, cf. Chap. VII of \cite{Sal63}]\label{thm-5.1} Let $0<\alpha<1/2$.   Suppose that $E\subset [0,2\pi]$ is  the attractor of an IFS
$\{\alpha x+a_i\}_{i=1}^\ell$, where  $2\leq \ell<1/\alpha$ and  $0=a_1<a_2<\ldots<a_\ell=1-\alpha$. \footnote{An additional assumption that $a_{i+1}-a_i>\alpha$ was put by Salem and Zygmund. Nevertheless, their proof did not use this assumption.} Then $E$ is a set of uniqueness if and only if
\begin{itemize}
\item[(i)] $\theta=1/\alpha$ is  a Pisot number,
\end{itemize}
and
\begin{itemize}
\item[(ii)]  $a_1, \ldots, a_\ell$ are  algebraic numbers in the field of $\theta$ over $\Q$.
\end{itemize}
\end{thm}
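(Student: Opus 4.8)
The plan is to route the statement through the distributional reformulation of uniqueness and then read off each of the two conditions from the arithmetic of $\theta=1/\alpha$ and the digits $a_1,\dots,a_\ell$. I would first recall two classical pillars from the theory of trigonometric series: (1) every \emph{$H$-set} is a set of uniqueness, where $E$ is an $H$-set if there exist integers $n_k\to\infty$ and a fixed nondegenerate arc $I$ with $n_kE\pmod{2\pi}\cap I=\emptyset$ for all $k$; and (2) if $E$ supports a nonzero \emph{Rajchman measure} $\mu$ (a Borel measure with $\widehat\mu(n)\to 0$ as $|n|\to\infty$), then $E$ is a set of multiplicity, since such $\mu$ is a nonzero pseudofunction carried by $E$. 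With this dictionary the theorem splits into: (a) if $\theta$ is Pisot and every $a_i\in\Q(\theta)$, then $E$ is an $H$-set, hence a set of uniqueness; and (b) if $\theta$ is not Pisot, or some $a_i\notin\Q(\theta)$, then $E$ carries a nonzero Rajchman measure, hence is a set of multiplicity.

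For direction (a) I would exhibit the $H$-set structure directly. Writing a generic point of $E$ as $x=2\pi\sum_{i\ge0}a_{w_i}\alpha^{i}$ with $w\in\{1,\dots,\ell\}^{\N}$, and taking $n_k$ to be the nearest integer to $\theta^{k}$ (legitimate because $\theta$ Pisot forces $\|\theta^k\|\to0$ geometrically), one computes
$$\frac{n_k x}{2\pi}\;\equiv\;\sum_{i=0}^{k}a_{w_i}\,\theta^{\,k-i}\;+\;\sum_{i>k}a_{w_i}\,\theta^{-(i-k)}\pmod 1,$$
up to an error $O(\|\theta^k\|)$ coming from replacing $\theta^k$ by $n_k$. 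The tail sum is bounded and small, and the key point is that each head term $a_{w_i}\theta^{\,k-i}$ has fractional part governed by the \emph{Galois conjugates} of $\theta$: since $a_{w_i}\in\Q(\theta)$ and all conjugates of $\theta$ lie strictly inside the unit disk, $\theta^{m}a_{w_i}$ differs from an integer by a quantity $O(\delta^{m})$ with $\delta<1$. Summing, $n_kx\pmod{2\pi}$ is confined, uniformly in $x\in E$ and in $k$, to a small neighbourhood of a fixed compact self-similar set $\Lambda\subset[0,2\pi]$ built from the conjugate contractions; the separation hypotheses $\ell<1/\alpha$ and $0=a_1<\cdots<a_\ell=1-\alpha$ are what keep $\Lambda$ a \emph{proper} subset, so it misses a fixed arc $I$. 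This identifies $E$ as an $H$-set. The conceptual core of this half is the interplay of the two hypotheses — Pisot to make $\theta^k$ nearly integral, $a_i\in\Q(\theta)$ to control $\theta^m a_i\bmod 1$ through the conjugates — but once the conjugate estimate is set up it is essentially bookkeeping; the self-similar relation $E=\bigcup_i(\alpha E+a_i)$ also provides a more robust route via a renewal identity for $\widehat S$ should the clean arc-avoidance prove awkward.

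For direction (b) I would build the natural self-similar measure $\mu$ (push-forward of the uniform Bernoulli measure on $\{1,\dots,\ell\}^{\N}$) and prove $\widehat\mu(n)\to0$. Its transform factors as a Riesz-type product
$$\widehat\mu(t)=\prod_{k=0}^{\infty}P(\alpha^{k}t),\qquad P(s)=\frac1\ell\sum_{j=1}^{\ell}e^{2\pi i a_j s},$$
so that $|\widehat\mu(t)|=\prod_k|P(\alpha^{k}t)|$. When $\theta$ is not Pisot (respectively when some $a_j\notin\Q(\theta)$), the orbit $\{\alpha^{k}t\}$ fails to concentrate near the set where $|P|=1$, and an equidistribution/ergodic-average argument forces the geometric mean of the factors to stay strictly below $1$ along the relevant scales, yielding $\prod_k|P(\alpha^{k}t)|\to0$; thus $\mu$ is Rajchman and $E$ is a set of multiplicity.

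\textbf{The main obstacle is precisely this decay estimate in direction (b).} Unlike the algebraic computation of direction (a), establishing $\widehat\mu(t)\to0$ for non-Pisot $\theta$ is genuinely analytic and is most delicate for algebraic non-Pisot (Salem) numbers, where $\|\theta^k\|$ need not tend to $0$ and the equidistribution of $\{\theta^{k}\bmod 1\}$ is subtle; controlling the infinite product along all scales, rather than along a convenient subsequence, is where I expect the heavy lifting to lie and where the original Salem--Zygmund argument is most technical. By contrast, the reformulation of the first paragraph and the $H$-set computation of direction (a) are comparatively routine once the conjugate estimate is in hand.
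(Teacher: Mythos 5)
The first thing to note is that the paper contains no proof of this statement: Theorem \ref{thm-5.1} is quoted from Chapter VII of Salem's book \cite{Sal63}, the footnote recording only that the extra hypothesis $a_{i+1}-a_i>\alpha$ in the original is never used in Salem and Zygmund's argument. So your proposal has to be measured against the classical Salem--Zygmund proof, and there it has a genuine gap --- in the half you describe as ``comparatively routine''. You claim that when $\theta$ is Pisot and $a_i\in\Q(\theta)$, the set $E$ is an $H$-set: the dilations $n_k\approx\theta^k$ push $E$ into a neighbourhood of a fixed compact set $\Lambda$, and ``the separation hypotheses $\ell<1/\alpha$ and $0=a_1<\cdots<a_\ell=1-\alpha$ are what keep $\Lambda$ a proper subset, so it misses a fixed arc''. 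That step is unjustified and in general false. Each head term $a_{w_i}\theta^{k-i}$ is indeed within $O(\delta^{k-i})$ of an integer, where $\delta<1$ bounds the moduli of the conjugates of $\theta$, but these errors must be summed over $i$: the total is only $O\bigl(1/(1-\delta)\bigr)$, and the cluster set of $n_kx \bmod 2\pi$ is (up to the small tail) the value set of the conjugate series $\sum_{m\geq 0}\sum_{j}\sigma_j(b_m)\bigl(\theta^{(j)}\bigr)^{m}$, $b_m\in\{a_1,\dots,a_\ell\}$. The size of that set is governed by the conjugates of $\theta$ and of the $a_i$, about which the separation hypotheses say nothing; when $\theta$ has conjugates close to the unit circle its diameter can far exceed $2\pi$, and nothing forces its reduction mod $2\pi$ to avoid an arc. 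This is precisely why Salem and Zygmund do not prove that $E$ is an $H$-set: they prove it is an $H^{(n)}$-set with $n=\deg\theta$, tracking the joint vector of $n$ consecutive near-integer dilations $(n_kx,n_{k+1}x,\dots,n_{k+n-1}x)$ in $\mathbb{T}^n$, which the linear recurrence satisfied by the powers of $\theta$ confines to a neighbourhood of a proper closed subset of the torus, and they then invoke Pyatetskii-Shapiro's theorem that every $H^{(n)}$-set is a set of uniqueness. Both the multidimensional avoidance argument and Pyatetskii-Shapiro's theorem are absent from your outline, and without them direction (a) does not close.

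Direction (b) is set up correctly --- a Rajchman measure on $E$ is a nonzero pseudofunction, hence witnesses multiplicity, and $\widehat{\mu}$ factors as the product you write --- but you leave its core, the decay $\widehat{\mu}(t)\to 0$, as an admitted obstacle, so this half is a plan rather than a proof. Moreover, the classical treatment does not run an equidistribution argument on $\{\alpha^k t\}$; it is contrapositive and arithmetic. If $|\widehat{\mu}(t_j)|\geq c>0$ along $t_j\to\infty$, the product formula forces $\sum_k\bigl(1-|P(\alpha^k t_j)|^2\bigr)$ to stay bounded, hence $\sum_k\|\gamma\alpha^k t_j\|^2$ stays bounded for every digit difference $\gamma=a_i-a_{i'}$ (here $\|y\|$ is the distance from $y$ to the nearest integer); writing $t_j=\theta^{N_j}u_j$ with $u_j$ in a fixed window and passing to a limit point $\lambda$ of $\gamma u_j$, one gets $\sum_n\|\lambda\theta^n\|^2<\infty$, and Pisot's theorem (if $\sum_n\|\lambda\theta^n\|^2<\infty$ for some $\lambda\neq 0$, then $\theta$ is Pisot and $\lambda\in\Q(\theta)$) delivers (i) and, using the normalization $a_1=0$, $a_\ell=1-\alpha$, also (ii). So the division of labour in your proposal is essentially inverted: the multiplicity half reduces to a known arithmetic theorem of Pisot, while the uniqueness half is where the genuinely new machinery ($H^{(n)}$-sets) must enter, and the ``routine'' $H$-set computation is the step that cannot be repaired without it.
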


Besides using the above theorem, we are going to use the following properties of the sets of uniqueness.

\begin{thm}[cf.  pp. 2, 71 of \cite{KeLo87}]
\label{thm-5.2}
\begin{itemize}
\item[(i)] The union of countably many closed sets of uniqueness is also a set of uniqueness.
\item[(ii)]
The sets of uniqueness are closed under translations, dilations and contractions.  That is, if $E, F\subset[0,2\pi]$ and $E=\lambda F+ a$ is an affine copy of $F$, then
$E$ is a set of uniqueness if and only if $F$ is a set of uniqueness.
\end{itemize}
\end{thm}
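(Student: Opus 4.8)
The plan is to handle the two assertions separately but to route both through the dual description of uniqueness in terms of \emph{pseudofunctions}, i.e.\ distributions $S=\sum_n c_ne^{inx}$ with $c_n\to 0$. The common preliminary reduction I would record first is this: every set of uniqueness is Lebesgue-null (a closed set of positive measure $E$ carries the nonzero pseudofunction $\mathbf 1_E\,dx$, whose coefficients tend to $0$ by Riemann–Lebesgue, hence is a set of multiplicity). Consequently, if $\sum_{n\ge 0}(a_n\cos nx+b_n\sin nx)$ converges to $0$ off a null set $E$, then it converges to $0$ on a set of full measure, so by the Cantor–Lebesgue theorem the coefficients tend to $0$ and the series defines a pseudofunction $S$; by Riemann's localization principle $\mathrm{supp}(S)\subseteq\overline E$, and $S=0$ exactly when all coefficients vanish. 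Thus, for closed null sets, ``$E$ is a set of uniqueness'' is equivalent to ``$E$ carries no nonzero pseudofunction,'' and this is the statement I would actually manipulate.

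For part (ii) the natural idea is to re-express the pseudofunction condition on the real line, where affine maps act transparently. After placing a compact $E$ inside one period, I would use the equivalence between the circle condition above and the assertion that $E$ carries no nonzero compactly supported distribution $S$ with $\mathrm{supp}(S)\subseteq E$ and $\widehat S(\xi)\to 0$ as $|\xi|\to\infty$, where $\widehat S(\xi)=\langle S,e^{-i\xi x}\rangle$. Granting this, affine invariance is a one-line computation: for $\phi(x)=\lambda x+a$ with $\lambda\neq 0$, the pushforward $\phi_*S$ is supported in $\phi(E)$ and $\widehat{\phi_*S}(\xi)=e^{-ia\xi}\,\widehat S(\lambda\xi)\to 0$ as $|\xi|\to\infty$, precisely because $\widehat S$ does. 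Hence $E$ carries a nonzero pseudofunction iff $\phi(E)$ does, which yields invariance under translations, dilations, and contractions. (Translation invariance alone is immediate already on $\mathbb T=\R/2\pi\Z$ via $x\mapsto x+a$; it is the dilation by a \emph{real} factor that forces the excursion to the line, since $\widehat S$ on the circle is defined only at integers.)

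For part (i) I would prove the union theorem by a Baire-category argument inside the support of a hypothetical counterexample. Assume $E=\bigcup_n E_n$ with each $E_n$ closed and a set of uniqueness; each $E_n$ is null, so $E$ is null, and a nonzero series converging to $0$ off $E$ produces, by the reduction above, a nonzero pseudofunction $S$ with compact support $K=\mathrm{supp}(S)\neq\emptyset$. If $\bigcup_n(K\cap E_n)$ were not dense in $K$, some open interval $I$ would meet $K$ with $I\cap K\subseteq K\setminus E$; then the series would converge to $0$ at every point of $I$ (off $E$ by hypothesis, off $K$ by localization), forcing $I\cap K=\emptyset$ by localization, a contradiction. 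So $\bigcup_n(K\cap E_n)$ is dense in the compact (hence Baire) space $K$, and each $K\cap E_n$ is closed; by Baire category some $K\cap E_n$ has nonempty interior relative to $K$, i.e.\ $\emptyset\neq I\cap K\subseteq E_n$ for an open interval $I$. Choosing $g\in A(\mathbb T)$ with absolutely convergent Fourier series, $g\equiv 1$ near a point $x_0\in I\cap K$ and $\mathrm{supp}(g)\subseteq I$, the formal product $gS$ (with $\widehat{gS}=\widehat g*\widehat S\in c_0$ since $\ell^1*c_0\subseteq c_0$) is a nonzero pseudofunction supported in $I\cap K\subseteq E_n$, contradicting that $E_n$ is a set of uniqueness.

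The step I expect to be the main obstacle is the passage to a single $E_n$ in part (i): turning the density statement into a genuine ``portion'' of $K$ sitting inside one $E_n$, and then executing the formal multiplication so that the localized object is provably nonzero and supported in $E_n$, uses the full strength of Riemann's localization theory together with Rajchman's formal-multiplication lemma. In part (ii) the only real subtlety is the circle-to-line equivalence of the pseudofunction characterization (the Fourier transform of a compactly supported distribution is entire of exponential type, so vanishing along the integers is not formally the same as vanishing along the reals); once that equivalence is in place the affine invariance is essentially formal. Since both statements are classical—due in essence to Riemann, Cantor–Lebesgue, Rajchman, and Bari, and recorded in \cite{KeLo87}—in the paper one simply cites them, but the above is the route I would take to reconstruct the proofs.
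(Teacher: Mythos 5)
The paper itself contains no proof of Theorem \ref{thm-5.2}: the result is imported as a known fact from Kechris--Louveau \cite{KeLo87} (the cited pp.~2 and 71) and used as a black box in Section \ref{S-5}. So the only meaningful comparison is with the classical arguments behind that citation, and your reconstruction does follow essentially that route. Your part (i) is the standard proof of Bari's union theorem: pass to a nonzero pseudofunction $S$ with support $K$, use the two localization principles (a pseudofunction's series converges to $0$ off $K$; convergence to $0$ on an open interval forces $S$ to vanish there) to show that $\bigcup_n(K\cap E_n)$ is dense in $K$, apply Baire category in $K$ to obtain a portion $\emptyset\neq I\cap K\subseteq E_n$, and then localize by formal multiplication with a function $g$ having absolutely convergent Fourier series, $g\equiv 1$ near a point of $I\cap K$ and $\mathrm{supp}(g)\subseteq I$; your verification that $gS$ is a nonzero pseudofunction supported in $E_n$ (via $\ell^1 * c_0\subseteq c_0$ and $gS=S$ near that point) is exactly right. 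Your part (ii), for compact sets, through the equivalence ``a closed set is a set of uniqueness iff it carries no nonzero pseudofunction,'' transferred to the line, is also the standard argument, and the circle-to-line issue you flag (decay of $\widehat{S}$ along $\Z$ versus along $\R$ for a compactly supported distribution) is a genuine lemma, proved by multiplying by a smooth bump supported in a proper arc.

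The one real discrepancy is scope in part (ii). The statement in the paper is for arbitrary $E,F\subset[0,2\pi]$, whereas your argument covers only closed sets: the pseudofunction characterization is a theorem about \emph{closed} sets (extracting a nonzero pseudofunction supported inside $E$ from a series converging to $0$ off $E$ uses $\mathrm{supp}(S)\subseteq\overline{E}=E$), and your preliminary claim that every set of uniqueness is Lebesgue-null is justified by your argument only for measurable sets. For arbitrary sets the classical proof of affine invariance (Bari, Zygmund) is genuinely different: the substitution $x=\lambda y+a$ turns the series into a generalized trigonometric series with frequencies $\lambda n$, and one must redo the Riemann theory (Riemann function, Cantor--Lebesgue, localization) for non-integer frequencies. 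This gap is harmless for the paper, since Theorem \ref{thm-5.2}(ii) is only ever applied to compact sets: $\C_\alpha$ and $\C_\beta$ in the proof of Theorem \ref{thm-1.4}(iii), and $E$, $F$, $E+\F$ (with $\F$ finite) in the proof of Theorem \ref{thm-1.5}. But as a proof of the statement as literally written, you would need either to restrict it to closed sets or to supply the non-integer-frequency Riemann theory.
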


\begin{thm}[Menshov, cf. p. 46 of \cite{Sal63}] \label{thm-5.3} If $E\subset [0,2\pi]$ is a closed set of
uniqueness, then $\widehat{\eta}(n)\not \rightarrow 0$ as
$|n|\rightarrow +\infty$ for any Borel probability measure $\eta$
supported on $E$, where $\widehat{\eta}(n)=\int e^{-inx}d\mu(x)$ are the Fourier coefficients of
$\eta$.
\end{thm}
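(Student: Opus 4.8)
The plan is to prove the contrapositive: assuming a Borel probability measure $\eta$ supported on the closed set $E$ satisfies $\widehat{\eta}(n)\to 0$ as $|n|\to+\infty$, I will produce a nontrivial trigonometric series that converges to zero off $E$, thereby exhibiting $E$ as a set of multiplicity and contradicting the uniqueness hypothesis of Definition \ref{de-5.1}. The engine throughout is Riemann's classical theory of formal integration of trigonometric series.

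First I would set $c_n=\widehat{\eta}(n)$ and form the Riemann function obtained by integrating the Fourier--Stieltjes series $\sum_n c_n e^{inx}$ twice,
$$ F(x)=\frac{c_0}{2}\,x^2-\sum_{n\neq 0}\frac{c_n}{n^2}\,e^{inx}. $$
Since $|c_n|\le \eta([0,2\pi])=1$ and $\sum_{n\neq0}n^{-2}<\infty$, this series converges uniformly, so $F$ is continuous on $\R$. The purpose of the double integration is that, in the sense of distributions, $F''=\sum_n c_n e^{inx}$, and this is exactly the Fourier--Stieltjes series of $\eta$; pairing against test functions shows that it represents $2\pi\eta$.

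The key geometric step is localization. Because $\eta$ assigns no mass to the open set $[0,2\pi]\setminus E$, the distribution $F''$ vanishes there, so $F$ is affine on each connected component of $[0,2\pi]\setminus E$. On any interval where $F$ is affine the symmetric second difference $F(x+h)+F(x-h)-2F(x)$ is identically zero, hence the Schwarz second derivative $D^2F$ vanishes throughout $[0,2\pi]\setminus E$; by Riemann's theorem this says precisely that $\sum_n c_n e^{inx}$ is Riemann-summable to zero at every point off $E$. The series is nontrivial, since if all $c_n=\widehat{\eta}(n)$ vanished then $\eta$ would be the zero measure, contrary to $\eta([0,2\pi])=1$.

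Finally I would upgrade this to the convergence statement of Definition \ref{de-5.1}. The remaining issue, which I expect to be the real obstacle, is the passage from Riemann summability to ordinary convergence --- equivalently, the fact that for closed sets the notion of uniqueness under convergence coincides with the notion under Riemann summability. This is the deep part of the classical theory (formal multiplication of trigonometric series, together with the theorems of Rajchman, du Bois-Reymond and Bary); once it is invoked, the nontrivial series above witnesses that $E$ is a set of multiplicity, so $E$ cannot be a set of uniqueness, completing the contrapositive. One could phrase the whole argument equivalently in the language of pseudofunctions: $\eta$ is a nonzero pseudofunction carried by $E$, and a closed set carrying a nonzero pseudofunction is a set of multiplicity.
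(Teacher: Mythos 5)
The paper does not actually prove this statement: Theorem \ref{thm-5.3} is quoted as a classical result of Menshov, with a pointer to p.~46 of \cite{Sal63}, and is then used as a black box in the proofs of Theorems \ref{thm-1.4}(iii) and \ref{thm-1.5}. So there is no internal proof to compare against; what you have written is, in outline, the standard classical argument that lies behind the citation (the one found in Salem, Zygmund, Bary, or Kechris--Louveau). Your reduction is sound: pass to the contrapositive, form the Riemann function $F$, observe that $F''=2\pi\eta$ distributionally, so $F$ is affine on each component of the complement of $E$, hence the series is Riemann-summable to $0$ off $E$, and it is nontrivial since $\widehat{\eta}(0)=1$. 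You are also right, and commendably explicit, that the entire weight of the theorem sits in the final step, upgrading Riemann summability to genuine convergence off $E$: this is exactly where the hypothesis $\widehat{\eta}(n)\to 0$ enters, via Rajchman's localization/formal-multiplication theorem (a trigonometric series with coefficients tending to $0$ whose Riemann function is linear on an open interval converges to $0$ on that interval). A sanity check showing that this step is not a formality: take $\eta=\delta_{x_0}$ with $x_0\in E$; then everything before that step still holds ($F$ is affine off $x_0$, the series is Riemann-summable to $0$ off $\{x_0\}$ and nontrivial), yet a singleton is a set of uniqueness --- the argument is blocked only because $\widehat{\delta_{x_0}}(n)=e^{-inx_0}$ does not tend to $0$. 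One quibble: your parenthetical claim that this step is ``equivalently, the fact that for closed sets uniqueness under convergence coincides with uniqueness under Riemann summability'' is not the right statement to lean on (as phrased it is delicate at best, since summability uniqueness only makes sense after restricting to series with coefficients tending to zero); what you need, and what you do also name, is Rajchman's localization theorem, or equivalently the pseudofunction formulation in your closing sentence. With that classical theorem cited, your sketch is correct and complete, and indeed more informative than the paper's own treatment, which cites the result without proof.
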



\bigskip

Now we are ready to prove Theorem \ref{thm-1.4}(iii).
\begin{proof}[Proof of Theorem \ref{thm-1.4}(iii)] Let $0<\beta<\alpha<1/2$ and  $1/\alpha$ be a Pisot number.  By Theorem \ref{thm-5.1}, $\C_\alpha$ is a set of uniqueness.  Now assume that $\C_\beta$ can be affinely embedded into $\C_\alpha$. By Theorem \ref{thm-5.2}(ii), $\C_\beta$ is also a set of uniqueness. Thus by Theorem \ref{thm-5.1}, $1/\beta$ is a Pisot number.

Next we  prove that
$\frac{\log\beta}{\log \alpha}\in \mathbb{Q}$. Assume on the contrary that $\frac{\log\beta}{\log \alpha}\not\in \mathbb{Q}$. We will derive a contradiction as follows.

Denote $b=\frac{1-2\alpha}{\alpha}$. By Lemma \ref{lem-4.1}(ii),  for any $u\in [0,b]$, there exists $c\in \mathbb{R}$ such that
\begin{align}\label{affine-c}
u\C_\beta+c\subseteq \C_\alpha.
\end{align}
Define $f: [0, b]\to \C_\alpha$ by $f(u)=\sup\{d\in \R:\;u\C_\beta+d\subseteq \C_\alpha\}$. A compactness argument shows that  $f$ is upper semi-continuous.

Let $\mu$ denote the normalized $\frac{\log 2}{\log (1/\beta)}$-dimensional Hausdorff measure restricted on $\C_\beta$.
For $u\in [0,b]$, define $h_u:\mathbb{R}\rightarrow \mathbb{R}$ by $h_u(x)=ux+f(u)$.
Let $\eta=\frac{1}{b}\int_0^b \mu\circ h_u^{-1} d u$, i.e., $$\eta(A)=\frac{1}{b}\int_0^b \mu\circ h_u^{-1}(A) du$$ for any Borel set $A\subseteq \mathbb{R}$.
Since $u\C_\beta+f(u)\subseteq \C_\alpha$ for each $u\in [0,b]$, $\eta$ is supported on $\C_\alpha$.

Now let us estimate the Fourier coefficients of $\eta$. For $n\in {\Bbb Z}$,
\begin{align*}
\widehat{\eta}(n)&=\int_{\mathbb{R}}e^{-in x} d\eta(x)
=\frac{1}{b}\int_0^b  \int_{\mathbb{R}}e^{-in x} d\mu\circ h_u^{-1}(x)du\\
&=\frac{1}{b}\int_0^b \int_{\mathbb{R}}e^{-in (ux+f(u))} d\mu(x)  du\\
&=\frac{1}{b}\int_0^b \widehat{\mu}(un)e^{-if(u)n} du,
\end{align*}
where $\widehat{\mu}(\xi):=\int e^{-i\xi x} d\mu(x)$ for $\xi\in \R$.
Hence
\begin{align}
\label{e-5.1}
|\widehat{\eta}(n)|\le \frac{1}{b}\int_0^b |\widehat{\mu}(un)|du\le \frac{1}{b|n|}\int_{-b|n|}^{b|n|} |\widehat{\mu}(x)|dx
\end{align}
for $n\neq 0$. However, since $\mu$ does not contain  atomics, by Wiener Theorem \cite{Wie59},
\begin{align*}
\limsup_{T\rightarrow +\infty}\frac{1}{T}\int_{-T}^T |\widehat{\mu}(x)|^2dx=0.
\end{align*}
Applying the Cauchy-Schwartz inequality, we have  $\limsup_{T\rightarrow +\infty}\frac{1}{T}\int_{-T}^T |\widehat{\mu}(x)|dx=0$.
Thus by \eqref{e-5.1}, $\widehat{\eta}(n)\to 0$ as $|n|\to \infty$. However, this contradicts Theorem \ref{thm-5.3}, since $\C_\alpha$ is a set of uniqueness and $\eta$ is supported on $\C_\alpha$.
\end{proof}

\begin{rem}
\label{rem-5.1} As an extension of Theorem \ref{thm-1.4}(iii), the following statement can be proved by using the same argument:
 Suppose that $B\subset\R$ is a compact set of uniqueness and $A\subset \R$  a compact set which supports a continuous Borel probability measure. Then for any $\epsilon>0$, there exists $\delta\in (0,\epsilon)$ such that $B$ does not  contain any translation of $\delta A$.

\end{rem}

\begin{lem}
\label{lem-5.6}
Let $\theta>1$ be a Pisot number and $\A$ be a finite subset of $\Z[\theta]$. Then there exists a constant $C>0$ such that for any $n\in \N$ and $t_1,\ldots, t_n\in \A$,
$$
\mbox{either } \quad \sum_{i=1}^n t_i\theta^i=0 \quad \mbox{or }\quad  \left| \sum_{i=1}^n t_i\theta^i\right|\geq C.
$$
\end{lem}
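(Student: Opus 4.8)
The plan is to exploit the algebraic conjugates of $\theta$ together with the multiplicativity of the field norm. Let $d$ be the degree of $\theta$ over $\Q$, and let $\theta=\theta_1,\theta_2,\ldots,\theta_d$ be the roots of its minimal polynomial, so that $|\theta_k|<1$ for $2\le k\le d$ by the Pisot property. Denote by $\sigma_1=\mathrm{id},\sigma_2,\ldots,\sigma_d$ the corresponding embeddings of $\Q(\theta)$ into $\mathbb{C}$, determined by $\sigma_k(\theta)=\theta_k$. Given $t_1,\ldots,t_n\in\A$, I would set
\[
S=\sum_{i=1}^n t_i\theta^i ,
\]
and observe that, since each $t_i\in\Z[\theta]$ and $\theta$ is an algebraic integer, $S$ is an algebraic integer lying in $\Q(\theta)$.

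First I would record a uniform bound on the ``small'' conjugates. Because $\A$ is finite, the set $\{|\sigma_k(t)|:t\in\A,\ 1\le k\le d\}$ is bounded by some constant $M$. Applying the ring homomorphism $\sigma_k$ to $S$ gives $\sigma_k(S)=\sum_{i=1}^n\sigma_k(t_i)\theta_k^i$, and for $2\le k\le d$ the inequality $|\theta_k|<1$ yields
\[
|\sigma_k(S)|\le M\sum_{i=1}^n|\theta_k|^i\le \frac{M|\theta_k|}{1-|\theta_k|},
\]
a quantity independent of $n$ and of the chosen $t_i$. Hence $\prod_{k=2}^d|\sigma_k(S)|\le B$, where $B:=\prod_{k=2}^d \frac{M|\theta_k|}{1-|\theta_k|}$ is a fixed constant.

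The final step is the integrality input. The field norm $N(S)=\prod_{k=1}^d\sigma_k(S)$ is a rational integer, since $S$ is an algebraic integer; consequently, if $S\neq 0$ then each embedding gives $\sigma_k(S)\neq 0$, so $N(S)$ is a nonzero integer and $|N(S)|\ge 1$. Writing $|S|=|\sigma_1(S)|=|N(S)|\big/\prod_{k=2}^d|\sigma_k(S)|$ and inserting the bound from the previous step, I would conclude
\[
|S|\ge \frac{1}{B},
\]
so the lemma holds with $C:=1/B$.

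I do not expect a serious obstacle; the crux is simply to identify the right invariant. The Pisot hypothesis is used exactly once, and essentially, to guarantee that the conjugate factors $\sigma_k(S)$ with $k\ge 2$ remain bounded uniformly in $n$---this is what prevents $|S|$ from being forced arbitrarily small while the integer $N(S)$ stays nonzero. The only points requiring care are that $\Z[\theta]$ consists of algebraic integers (so that $N(S)\in\Z$) and that each $\sigma_k$ respects addition and multiplication, validating the termwise expansion $\sigma_k(S)=\sum_i\sigma_k(t_i)\theta_k^i$; both are standard facts that hold whether or not $\Q(\theta)/\Q$ is Galois and whether or not some $\theta_k$ are non-real.
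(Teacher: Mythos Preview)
Your proposal is correct and follows essentially the same approach as the paper: both multiply $S$ by its Galois conjugates to obtain a nonzero rational integer, then use the Pisot hypothesis to bound the conjugate factors uniformly in $n$, yielding the lower bound on $|S|$. Your presentation via field embeddings and the norm $N_{\Q(\theta)/\Q}$ is slightly more explicit than the paper's (which speaks informally of ``conjugates'' of the $t_i$), but the argument is identical.
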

\begin{proof}
The result was essential due to Garsia \cite[Lemma 1.52]{Gar62}. For completeness, we provide a proof.

 We denote by $\theta^{(1)},\ldots, \theta^{(k)}$  the algebraic conjugates of $\theta$, and by $t^{(1)}, \ldots, t^{(k)}$  the conjugates of $t\in \A$. Since $\theta$ is a Pisot number,  $$\rho:=\max_{1\leq j\leq k}|\theta^{(j)}|<1.$$
Corresponding, for each $t\in \A$, we denote by $t^{(1)}, \ldots, t^{(k)}$  the conjugates of $t$.

Let $t_1,\ldots, t_n\in \A$. Assume that $\sum_{i=1}^n t_i\theta^i\neq 0$. Then
$$
\left(\sum_{i=1}^n t_i\theta^i\right)\left(\prod_{j=1}^k \sum_{i=1}^n  t_i^{(j)}(\theta^{(j)})^i\right)
$$
is a non-zero integer. Hence
$$
\left|\sum_{i=1}^n t_i\theta^i\right|\geq \frac{1}{ \prod_{j=1}^k \sum_{i=1}^n  |t_i^{(j)}(\theta^{(j)})^i|}\geq \frac{(1-\rho)^k}{\max\{|t^{(j)}|^k:\; 1\leq j\leq k, \; t\in \A\}}=: C.
$$
This finishes the proof.
\end{proof}

\begin{proof}[Proof of Theorem \ref{thm-1.5}] Denote $\D=\{a_i:\; i=1,\ldots, \ell\}$ and
$$
\Lambda:=\left\{\sum_{i=1}^n t_i\theta^i:\; n\in \N,\;  t_1,\ldots, t_n\in \D-\D\right\}.
$$
Then  $\Lambda-\Lambda=\left\{\sum_{i=1}^n t_i\theta^i:\; n\in \N,\;  t_1,\ldots, t_n\in \A\right\}$, with $\A:=(\D-\D)-(\D-\D)$.
By Lemma \ref{lem-5.6}, $\Lambda-\Lambda$ is uniformly discrete. Hence  $\Lambda\cap [0, 1]$ is a finite set.

In the following we first prove Theorem \ref{thm-1.5} in the case that $F$ is homogeneous in the sense that  $\beta_jO_j(x)=\beta x$ for some $\beta>0$ and all $1\leq j\leq m$.  Since  $F$ can be affinely embedded into $E$,  there exist $a, \lambda\in \R$ such that $\lambda\neq 0$ and $a+\lambda F\subseteq E$. Without loss of generality, we may assume that $\lambda>0$ (notice that  $-F$ is also a homogeneous self-similar set). By Theorem \ref{thm-5.1}, $E$ is a set of uniqueness. Hence by Theorem \ref{thm-5.2}, $F$ is also a set of uniqueness.  Applying Theorem \ref{thm-5.1} to $F$, we see that $1/\beta$ is a Pisot number.  Assume that $\log \beta/\log \alpha\not\in \Q$. We derive a contradiction as below.

We claim that for any $u\in (0, 1/{\rm diam(F)}]$, there exists $c=c_u\in \R$ such that
\begin{equation}
\label{e-5.7}
uF+c\subseteq E+\F,
\end{equation}
where $\F:=\Lambda\cap [0, 1]$. To see this,  let $u\in (0, 1/{\rm diam(F)}]$.  Since $\log \beta/\log \alpha\not\in \Q$, there exist pairs $(k_i,n_i)\in
\N^2$ such that $\frac{\lambda
\beta^{k_i}}{\alpha^{n_i}}<1/{\rm diam(F)}$ and
$\frac{\lambda \beta^{k_i}}{\alpha^{n_i}}\rightarrow
u$ as $i\rightarrow +\infty$. However, for each $i\in \N$,
we have $$a+\lambda \psi_{1^{k_i}}(F)\subset a+\lambda F \subseteq E=\bigcup_{I\in \{1,\ldots, \ell\}^{n_i}}\phi_I(E).$$
Notice that $a+\lambda \psi_{1^{k_i}}(F)=\lambda \beta^{k_i}F+d_i$ for some $d_i\in \R$, and
$$
\bigcup_{I\in \{1,\ldots, \ell\}^{n_i}}\phi_I(E)=\alpha^{n_i}\left(E+\D_{n_i}\right),
$$
with $\D_{n_i}:=\left\{\sum_{j=1}^{n_i} t_j\theta^j:\; t_1,\ldots, t_{n_i}\in \D\right\}$. Let $c_{i}$ be the smallest element in $\D_{n_i}$ so that
$(\lambda \beta^{k_i}F+d_i) \cap  \alpha^{n_i}(E+c_i)\neq \emptyset$. As the diameter of $\lambda \beta^{k_i}F+d_i$ is less than $\alpha^{n_i}$,  we have
$(\lambda \beta^{k_i}F+d_i) \cap  \alpha^{n_i}(E+t)=\emptyset$ for any $t\in \D_{n_i}$ satisfying $t<c_i$ or $t>c_i+1$.  Thus,
$$
\lambda \beta^{k_i}F+d_i\subseteq \alpha^{n_i}\left(E+c_i+(\D_{n_i}-c_i)\cap [0,1]\right)\subseteq \alpha^{n_i}\left(E+c_i+\F\right).
$$
Therefore $\lambda \beta^{k_i}\alpha^{-n_i}F+e_i\in E+\F$ for some $e_i\in \R$. Letting $i\to \infty$, we have $uF+c \subseteq E+\F$, where $c$ is an accumulation point of $(e_i)$. This proves the claim.

 Since $E$ is a set of uniqueness and $\F$ is a finite set, by Theorem \ref{thm-5.2}, $E+\F$ is also a set of uniqueness.   By \eqref{e-5.7} and Remark \ref{rem-5.1}, we get a contradiction. Therefore, we have proved Theorem \ref{thm-1.5} in the case that $F$ is homogeneous.

Next we consider the case that $\beta_j$, $1\leq j\leq m$, might be different.  Without loss of generality, we show that $\log \beta_1/\log \alpha\in \Q$ and $1/\beta_1$ is  a Pisot number. We first repeat some argument used in the last paragraph of the proof of Theorem \ref{thm-1.3}.
Since $F$ is not a singleton, there exists $j\geq 2$ such that the fixed point of  $\psi_j$ is different that of $\psi_1$. Let $F_1$ be the attractor of the IFS
$\{\psi_1\circ \psi_j, \psi_j\circ \psi_1\}$. Then $F_1\subset F$ is not a singleton and can be affinely embedded into $E$, hence $\log (\beta_1\beta_j)/\log \alpha\in \Q$. Similarly considering the IFS
$\{\psi_1^2\circ \psi_j, \psi_j\circ\psi_1^2\}$,  we have $\log (\beta_1^2\beta_j)/\log \alpha\in \Q$. Hence $\log \beta_1/\log \alpha\in \Q$.

  To see that $1/\beta_1$ is  a Pisot number, we notice that for any $n,m\in \N$, $\beta_1^{-n}\beta_j^{-m}$ is a Pisot number (since the attractor of the homogeneous IFS $\{\psi_1^n\circ \psi_j^m,\; \psi_j^m\circ \psi_1^n\}$ can be affinely embedded into $E$). We also notice that $\log \beta_j/\log \beta_1\in \Q$ (since $\log \beta_j/\log \alpha,\ \log\beta_1/\log \alpha\in \Q$). Write $\beta_j=\beta_1^{u/v}$, where $u,v$ are co-prime positive integers.  Then  for any $n\in \N$, $\beta_1^{-n-u}=\beta_1^{-n}\beta_j^{-v}$ is a Pisot number.
Let $f(x)$ be the minimal integer polynomial for $\xi:=1/\beta_1$. Let $\xi_1,\ldots, \xi_k$ denote the algebraic conjugates of $\xi$, and set $\xi_0:=\xi$.
Take an integer $p>u$ so that $$e^{2\pi i/p}\not\in \{\xi_i/\xi_j:\; 0\leq i,j\leq k\}.$$ Then $\xi_i^p$, $i=0,\ldots, k$, are distinct.
As the Galois group of the minimal polynomial $f$  for $\xi$ is transitive, so for any $1\leq i\leq k$, there is
an automorphism $h$ of the Galois group mapping $\xi$ to $\xi_i$. Let $g$ be the minimal integer polynomial for $\xi^p$. Then   $g(\xi_i^p)=g(h(\xi)^p)=h(g(\xi^p))=0$.
Hence  $\xi^p_i$ ($i=1,\ldots, k$) are algebraic conjugates of $\xi^p$ . Since $\xi^p$ is a Pisot number, we have $|\xi^p_i|<1$ for $1\leq i\leq k$. Hence $|\xi_i|<1$ for $1\leq i\leq k$.
It follows that $\xi=1/\beta_1$ is a Pisot number.
\end{proof}

\noindent{\bf Acknowledgements}. The authors would like to thank Julien Barral for  providing an  abstract in French. They  are grateful to  Hochman and Shmerkin for bringing up to their attention  Furstenberg's conjectures on intersections of Cantor sets \cite{Fur69}. The first author was partially supported by the RGC grant  in CUHK. The second author was partially supported by NNSF for
Distinguished Young Scholar (11225105), Fok Ying Tung Education Foundation and the Fundamental Research
Funds for the Central Universities (WK0010000014). The third author was
partially supported by  NNSF (10501002
and 11171128).

\end{document}